\documentclass[11pt,reqno,oneside]{amsart}
\usepackage{hyperref}
\usepackage{geometry}
\usepackage{graphicx}
\usepackage{amssymb}
\usepackage{pgfplots}
\usepackage{amsmath, mathrsfs, stmaryrd, color,slashed}
\newtheorem{theorem}{Theorem}

\usepackage[title]{appendix}
\newtheorem{remark}[theorem]{Remark}
\newtheorem{example}[theorem]{Example}
\newtheorem{definition}[theorem]{Definition}

\newtheorem{corollary}[theorem]{Corollary}
\newtheorem{lemma}[theorem]{Lemma}
 
\newtheorem{claim}[theorem]{Claim}
\newcommand{\R}{\mathbb R}

\numberwithin{theorem}{section}
\numberwithin{equation}{section}

\begin{document} 

\title {Inverse Mean Curvature Flow with Singularities}
\author{Beomjun Choi}
\address{Beomjun Choi\\
Department of Mathematics\\
Columbia University, USA}
\email{bc2491@columbia.edu}
\author{Pei-Ken Hung}
\address{Pei-Ken Hung\\
Department of Mathematics\\
Massachusetts Institute of Technology, USA}
\email{pkhung@mit.edu}

\begin{abstract}
This paper concerns the inverse mean curvature flow of complete convex hypersurfaces which are Lipschitz in general. After defining a weak solution, we study the evolution of singularities by looking at the blow-up tangent cone around each singular point.  We prove the cone also evolves by the inverse mean curvature flow and each singularity is removed when the evolving cone becomes flat.  As a result, we derive the exact waiting time for a weak solution to be a smooth solution. In particular, a necessary and sufficient condition for an existence of smooth classical solution is given.  
 \end{abstract}
\maketitle

\section{Introduction}
A one-parameter family of smooth embedded hypersurfaces $F:M^n\times [0,T] \to \mathbb{R}^{n+1}$ is a classical solution of the {\em inverse mean curvature flow} (IMCF) if \[\frac{\partial\,}{\partial t} F(p,t) = \frac{1}{H(p,t)} \nu (p,t)\quad \text{for all} \quad (p,t) \in M\times[0,T] \] where $H(p,t)>0$ and $\nu(p,t)$ are the mean curvature and the outward unit normal vector of $M_t := F(M\times \{t\})$ at the point $F(p,t)$.

The inverse mean curvature flow has been studied extensively as an important example of expanding curvature flows last decades \cite{Ge2,Ur,Sm,HI,Ge}. After the IMCF was used to give a proof of the Riemannian Penrose inequality by Huisken and Ilmanen \cite{HI1,HI2}, there have been a number of application of it in showing geometric inequalities in various situations \cite{makowski2013rigidity,de2016alexandrov,ge2014hyperbolic,guan2007quermassintegral,BHW,lee2015penrose}. 

\medskip
``What happens if the initial hypersurface is a cube in $\mathbb{R}^3$?" This was the original question we had. 
Most of previous research considered the conditions on initial hypersurface which produce a smooth solution for $t>0$. Even in the weak formulation of Huisken and Ilmanen  \cite{HI2,HI}, the mean curvature of initial hypersurface is assumed to be uniformly bounded in a weak distribution sense. If singular initial hypersurfaces are considered, however, singularities may not be removed instantaneously. The simplest example with this phenomenon is a round cone solution\[M_t:= \{(x',x_{n+1})\in\mathbb{R}^{n+1}\,:\, x_{n+1}=\tan (\theta(t)) |x'|\}.\]
When $\theta'(t) = -\frac{\cot(\theta(t))}{n-1}$, $M_t$ is a smooth solution of the IMCF outside the origin. In this paper, we take a slightly different approach to the problem and set our goal to understand the behavior of singularities under the flow.   

The previous example shows one difference between the IMCF and the {\em mean curvature flow} where we have an existence of smooth solution for Lipschitz hypersurfaces. In a PDE point of view, it is closely related with the type of diffusion equation the curvature is following under the IMCF. For the IMCF in $\mathbb{R}^{n+1}$, the mean curvature evolves by an ultra-fast diffusion\[\partial_tH =\nabla \cdot (H^{-2} \nabla H) - \frac{|A|^2}{H}.\] Apparently, the diffusion coefficient becomes zero when $H$ is infinite, which prevents the singularities from being removed.

Let's discuss our result. We restrict our attention to the class of complete convex initial hypersurfaces. Note that on a convex hypersurface every singular point has a unique convex tangent cone which is obtained by taking a blow-up. In the meantime, IMCF has a scaling property that if $M_t$ is a solution, then $\tilde M_t =\lambda M_t$ is again a solution; i.e., it does not scale in time variable.  This suggests that the blow-up tangent cone at a singular point may also evolve by IMCF in the same time scale. After defining a weak solution in Definition \ref{weak solution}, our main results, Theorem \ref{thm-euc} and Theorem \ref{thm-sph}, prove this assertion for the weak flows in $\mathbb{R}^{n+2}$ and $\mathbb{S}^{n+1}$, respectively. 

There is an inevitable reason why the weak flows in $\mathbb{R}^{n+2}$ and $\mathbb{S}^{n+1}$ need to be  simultaneously considered. We have a correspondence between hypersurfaces in $\mathbb{S}^{n+1}$ and conical hypersurfaces in $\mathbb{R}^{n+2}$: a hypersurface $\Gamma^n $ in $ \mathbb{S}^{n+1}$ generates a cone $$\mathcal{C} \Gamma:= \{r x\in\mathbb{R}^{n+2} \,:\, r>0 ,\, x\in \Gamma\}, $$ and conversely a conical hypersurface $C\subset \mathbb{R}^{n+2}$ is generated by its link $\Gamma:=C\cap \mathbb{S}^{n+1}$. Moreover, if $\Gamma^n_t$ in $ \mathbb{S}^{n+1}$ is a classical solution of IMCF, then the cone $\mathcal{C} \Gamma_t$  in $\mathbb{R}^{n+2}$ is a solution of IMCF which is smooth except the origin. If we take a blow-up tangent cone at a singular point in $\Sigma^{n+1}\subset \mathbb{R}^{n+2}$, the cone can also be singular as its link in $\mathbb{S}^{n+1}$ can be an arbitrary convex hypersurface. Hence, if we intend to describe singular solutions in $\mathbb{R}^{n+2}$, it is necessary to work with singular solutions in $\mathbb{S}^{n+1}$ as well.

The result of Gerhardt \cite{Ge} and Makowski-Scheuer  \cite{makowski2013rigidity} which is summarized in our Theorem \ref{smooth} shows every strictly convex smooth solution in $\mathbb{S}^{n+1}$ converges to an equator in a finite time. Since the tangent cone at a singularity also evolves by the weak IMCF, this suggests that the tangent cone becomes flat and the singularity at a point will be removed in a finite time.  When all singularities are removed in this way, one should expect the solution becomes smooth afterward. We prove this result in Corollary \ref{cor-smoothingtime}. In fact, this exact waiting time can be written in terms of the density function on initial hypersurface and the same argument gives a sufficient and necessary condition for an existence of smooth classical solution for $t>0$. These are given Remark \ref{finremark}. 
\section{Preliminaries and Definitions}
Every hypersurface appear from here is complete without boundary unless otherwise mentioned. 
In  $\mathbb{S}^{n+1}$, there are several different notions for a hypersurface $\Gamma^n$ being convex. We will use the following definition:

\begin{definition}\label{def-convexity}
A compact Lipschitz hypersurface $\Gamma^n\subset \mathbb{S}^{n+1}$ is convex if the corresponding cone $\mathcal{C}\Gamma$:=$\left\{rx \,:\,r\in [0,\infty),\ x\in \Gamma^n\subset\mathbb{R}^{n+2}  \right\}\subset\mathbb{R}^{n+2}$ is convex.
\end{definition}
\begin{remark}
From the above definition, a convex hypersurface $\Gamma^n\subset \mathbb{S}^{n+1}$ is contained in some hemisphere because of the supporting hyperplane of $\mathcal{C}\Gamma$ at the origin.
\end{remark}
In the rest of this paper, when we mention $\mathbb{S}^{n+1}$ and $\mathbb{R}^{n+2}$, $n\ge1$ is assumed unless it is stated otherwise.

\begin{remark}\label{antipodal_pts}
If $\Gamma^n\subset \mathbb{S}^{n+1}$ is convex and contains no antipodal points, then $\Gamma^n$ is compactly contained in some open hemisphere. For this proof, see for instance Lemma 3.8 \cite{makowski2013rigidity}.
\end{remark}

We will use the notion of strict convexity only for a smooth hypersurfaces and it means the hypersurface has strictly positive second fundamental form. For hypersurfaces in the sphere, we further require them to be compactly contained in some open hemisphere. We define $\mathbb{S}^{n+1}_+:=\left\{x\in\mathbb{S}^{n+1}\Big|\ \left\langle x,e_{n+2}\right\rangle>0 \right\}$.
\\
%\begin{definition}\label{def-convexity}
%A lipschitz hypersurface $\Gamma^n\subset \mathbb{S}^{n+1}$ is convex if its second fundamental form is non-negative in the viscosity sense and $\Gamma^n$ is  contained in some hemisphere. We say $\Gamma^n$ is strictly convex if its second fundamental form is positive in the viscosity sense and it is compactly contained in some open hemisphere. 
%\end{definition}
%\begin{remark}{\color{red} $\Gamma^n\subset \mathbb{S}^{n+1}$ is convex if and only if $\mathcal{C}\Gamma$ is convex in $\mathbb{R}^{n+2}$-need confirm}
%\end{remark}
%
%\begin{remark}
%This definition is equivalent to the (strict) convexity of the cone $\mathcal{C}\Gamma\subset \mathbb{R}^{n+2}$.
%{\color{red} Question (Beomjun) How a cone could be strictly convex?}
%\end{remark}

Expanding curvature flows for strictly convex hypersurfaces in $\mathbb{S}^{n+1}$ has been studied by C. Gerhardt \cite{Ge} and M. Makowski and J. Scheuer \cite{makowski2013rigidity}. The following is a special case of  Theorem 1.4 in \cite{makowski2013rigidity} when the \textit{inverse mean curvature flow} is considered. $T_{\Gamma_0}$ is not specified in the paper, but this time could be obtained explicitly by the exponential growth of the area with respect to time under the {\em inverse mean curvature flow}; i.e., $|\Gamma_t|=e^t |\Gamma_0|$.
\begin{theorem}\label{smooth}
Suppose $\Gamma^n_0\subset \mathbb{S}^{n+1}$ is a smooth, strictly convex hypersurface. Then the unique smooth, strictly convex solution of inverse mean curvature flow $\Gamma_t$ starting from $\Gamma_0$ exists for $t\in [0,T_{\Gamma_0})$, where $T_{\Gamma_0}=\ln|\mathbb{S}^n|-\ln|\Gamma_0|$. Furthermore, $\Gamma_t$ converges to an equator in $C^{1,\beta}$  as $t\to T_{\Gamma_0}$.
\end{theorem}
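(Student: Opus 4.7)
The plan is to invoke the general theory of expanding curvature flows in the sphere developed by Gerhardt \cite{Ge} and Makowski--Scheuer \cite{makowski2013rigidity} for all the qualitative content (short-time existence, preservation of strict convexity, long-time existence on a maximal interval $[0,T_{\Gamma_0})$, and $C^{1,\beta}$ convergence to an equator as $t\to T_{\Gamma_0}$), and to supply only the explicit identification $T_{\Gamma_0}=\ln|\mathbb{S}^n|-\ln|\Gamma_0|$. Specialised to speed $1/H$, Theorem 1.4 of \cite{makowski2013rigidity} applies because $\Gamma_0$ is smooth, strictly convex and therefore compactly contained in an open hemisphere by Remark \ref{antipodal_pts}, and thus gives every assertion except the value of $T_{\Gamma_0}$.

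To pin down $T_{\Gamma_0}$, I would compute the evolution of the intrinsic area. Since $\Gamma_t$ moves outward with speed $1/H$, the standard first-variation formula yields $\partial_t\, d\mu_t = d\mu_t$, so
\[
\frac{d}{dt}|\Gamma_t| \;=\; |\Gamma_t|, \qquad \text{hence} \qquad |\Gamma_t|\;=\;e^t\,|\Gamma_0|.
\]
The limiting equator $E\subset\mathbb{S}^{n+1}$ is a totally geodesic $n$-sphere of area $|\mathbb{S}^n|$, and the $C^{1,\beta}$ convergence of the embeddings implies uniform convergence of the induced metrics, hence convergence of the volume density $\sqrt{\det g_t}$. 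Therefore $|\Gamma_t|\to |\mathbb{S}^n|$ as $t\to T_{\Gamma_0}$, and substituting into the exponential formula gives $e^{T_{\Gamma_0}}|\Gamma_0|=|\mathbb{S}^n|$, i.e.\ $T_{\Gamma_0}=\ln|\mathbb{S}^n|-\ln|\Gamma_0|$.

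The main obstacles are all contained in the quoted theorem rather than in the argument above: preservation of strict convexity is a non-trivial maximum-principle argument applied to the evolution equation of the Weingarten operator on a non-flat background, and the identification of the limit as an equator uses the monotonicity of the enclosed spherical volume together with a contradiction argument ruling out any non-totally-geodesic subsequential limit. Once these are granted, the only genuine work in the proof as sketched is the area ODE, whose closed-form solution and the continuity of area under $C^{1,\beta}$ convergence give $T_{\Gamma_0}$ with no further analysis.
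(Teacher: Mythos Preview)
Your proposal is correct and matches the paper's own treatment almost verbatim: the paper states Theorem~\ref{smooth} as a special case of Theorem~1.4 in \cite{makowski2013rigidity} and only adds the remark that $T_{\Gamma_0}$ is determined by the exponential area growth $|\Gamma_t|=e^t|\Gamma_0|$. Your additional care in justifying $|\Gamma_t|\to|\mathbb{S}^n|$ via the $C^{1,\beta}$ convergence is a detail the paper leaves implicit.
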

For a complete convex hypersurface $\Sigma^{n+1}\subset \mathbb{R}^{n+2}$, we denote by $\hat{\Sigma}$ the closed region bounded by $\Sigma$. Similarly,  for a complete convex hypersurface $\Gamma^{n}\subset \mathbb{S}^{n+1}$ we denote by $ \hat\Gamma $ the closed region bounded by $\Gamma$ in $\mathbb{S}^{n+1}$. Since $\Gamma^n$ is in some hemisphere, there is no ambiguity in this definition unless $\Gamma$ is an equator. 

We need to define our notion of a weak solution. Since it is expected that singularities will persist, we can't describe this in terms of the classical solution. Also, the level set based weak formulation of Huiksen and Ilmanen \cite{HI2} would not be appropriate as it assumes bounded (weak) mean curvature of level sets. We just define our solution as a limit of a smooth approximation from inside. In the following definition, note $\Sigma_0^{n+1}$ can be non-compact.

\begin{definition}\label{weak solution}
For a complete convex hypersurface $\Sigma^{n+1}_0\subset \mathbb{R}^{n+2}$, let $\{\Sigma_{0,\epsilon}\}_{\epsilon\in(0,\epsilon_0)}$ be a family of smooth, strictly convex, compact hypersurfaces in $\mathbb{R}^{n+2}$ with the following properties:
\begin{align*}
\hat{\Sigma}_{0,\epsilon_2}\subset\subset\hat{\Sigma}_{0,\epsilon_1}&\ \textup{if}\ \epsilon_2>\epsilon_1,\quad\text{ and }\quad
\overline{\bigcup_{\epsilon>0}\hat{\Sigma}_{0,\epsilon}}=\hat{\Sigma}_0.
\end{align*} 
Let $\Sigma_{t,\epsilon}$ be the \textit{inverse mean curvature flow} starting from $\Sigma_{0,\epsilon}$. We define the (weak) solution of the flow $\Sigma_t$, for $t\ge0$, starting from $\Sigma_0$ by 
\begin{align}
\hat{\Sigma}_t&:=\overline{\bigcup_{\epsilon>0}\hat{\Sigma}_{t,\epsilon}},\quad\text{ and }\quad \Sigma_t:=\partial \hat{\Sigma}_t.
\end{align}
Similarly, for a complete convex compact hypersurface $\Gamma^n_0 \subset \mathbb{S}^{n+1}$, suppose $|\Gamma^n_0|<|\mathbb{S}^n|$ and hence $ T_{\Gamma_0}:=\ln|\mathbb{S}^n|-\ln |\Gamma_0|>0$. Let $\{\Gamma_{0,\epsilon}\}_{\epsilon\in(0,\epsilon_0)}$ be a family of smooth, strictly convex, compact hypersurfaces in $\mathbb{S}^{n+1}$ with the following properties:
\begin{align*}
\hat{\Gamma}_{0,\epsilon_2}\subset\subset\hat{\Gamma}_{0,\epsilon_1}&\ \textup{if}\ \epsilon_2>\epsilon_1,\quad\text{ and }\quad
\overline{\bigcup_{\epsilon>0}\hat{\Gamma}_{0,\epsilon}}=\hat{\Gamma}_0.
\end{align*} 
Let $\Gamma_{t,\epsilon}$ be the \textit{inverse mean curvature flow} starting from $\Gamma_{0,\epsilon}$. We define (weak) solution of the flow $\Gamma_t$, for $t\in[0,T_{\Gamma_0} )  $, starting from $\Gamma_0$ by 
\begin{align}
\hat{\Gamma}_t&:=\overline{\bigcup_{\epsilon>0}\hat{\Gamma}_{t,\epsilon}},\quad\text{ and }\quad \Gamma_t:=\partial \hat{\Gamma}_t.
\end{align}

\end{definition}
\begin{remark} When initial hypersurface is compact, an approximating family $\Sigma_{0,\epsilon}$ and $\Gamma_{0,\epsilon}$ in the definition could be obtained by running the mean curvature flow from $\Sigma_0$ and $\Gamma_0$, respectively. When $\Sigma_0$ in $\mathbb{R}^{n+2}$ is non-compact, we may choose $\Sigma_{0,\epsilon}$ to be the time $t=\epsilon$ slice of the mean curvature flow running from $\partial ( \hat\Sigma_0 \cap B_{\epsilon^{-1}}(0))$. By using the avoidance principle between smooth compact solutions of the IMCF, it is not hard to check the weak solution is independent of  approximations and hence unique.  

In the definition of $\Sigma_t$, $\Sigma_{t,\epsilon}$ are convex since convexity is preserved under the flow by Corollary A.4 in \cite{CD}. Therefore $\Sigma_t$ is convex. Moreover, when $\Sigma_0$ is compact, $\Sigma_t$ is non-empty compact hypersurface for all $t>0$ and the exponential in time area growth of classical solution imply $$|\Sigma_t|=\lim_{\epsilon\to 0} |\Sigma_{t,\epsilon}|=\lim _{\epsilon\to 0}e^t |\Sigma_{0,\epsilon}|= e^t |\Sigma_0|.$$ When $\Sigma_0$ is non-compact, it may happen that $\hat \Sigma_t=\mathbb{R}^{n+2}$ and hence $\Sigma_t$ becomes empty in finite time. We will discuss non-compact case  with complete details in Remark \ref{non-compact}.

In the definition of $\Gamma_t$, since $\Gamma_{0,\epsilon}$ has a smaller area than $\Gamma_0$, $\Gamma_{t,\epsilon}$ exists at least for $t\in[0,\ln|\mathbb{S}^n|-\ln |\Gamma_0| )$ by Theorem \ref{smooth}. Therefore, we can take the limit $\epsilon\to 0$ for $t<\ln|\mathbb{S}^n|-\ln |\Gamma_0|$. 
Note also that $ \Gamma_t$ is convex in the sense of Definition \ref{def-convexity}.  Moreover,  $$|\Gamma_t|=\lim_{\epsilon\to 0} |\Gamma_{t,\epsilon}|=\lim _{\epsilon\to 0}e^t |\Gamma_{0,\epsilon}|= e^t |\Gamma_0|.$$

\end{remark}
 
We will need the following apriori estimate of $H^{-1}$ weighted by $|x|^{-1}$. This is main theorem in \cite{CD} and will be used in an important way later.
\begin{theorem}[Theorem in \cite{CD}] \label{thm-CD}
For $n\ge2$, let $F:M^n\times [0,T] \to \R^{n+1}$ be a smooth convex compact solution of IMCF and assume for $t\in[0,T]$ there is  $\theta_1 \in (0,\pi/2)$ and a unit vector $\omega \in \mathbb{R}^{n+1}$ so that \[ \langle F , \omega \rangle \ge  {\sin\theta_1}\, |F|.\]
Then \[\frac{1}{H|F|}\le C\left( 1+ {t^{-1/2}}\right) \quad\text{on}\quad M\times[0,T]\] by some $C=C(\theta_1)>0$.
\end{theorem}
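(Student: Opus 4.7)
The quantity $(H|F|)^{-1}$ is invariant under the spatial dilations $F\mapsto \lambda F$ that preserve IMCF, so no estimate is possible without geometric input, and the cone hypothesis $\langle F,\omega\rangle\ge\sin\theta_1\,|F|$ must be the source of quantitative control. The form $C(1+t^{-1/2})$ strongly suggests a time-weighted Bernstein maximum principle: the $t^{-1/2}$ reflects short-time smoothing from smooth strictly convex data, while the additive constant is the geometric barrier coming from the cone. The plan is to apply the parabolic maximum principle to an auxiliary scalar built from $1/H$ and $|F|$, multiplied by $t$, on the smooth compact flow (passage to the approximating family of Definition \ref{weak solution} is automatic once the a priori bound is established at the smooth level).

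\textbf{Proposed argument.} Set $u := 1/H$ and $r := |F|$, and let $\mathcal{L} := \partial_t - u^2\Delta_{M_t}$ denote the parabolic operator adapted to IMCF. Standard IMCF computations give
\[
\partial_t F = u\nu,\qquad \mathcal{L} u = -u^3 |A|^2 + 2u^{-1}|\nabla u|^2,
\]
and on $M_t$ one has $\nabla r^2 = 2F^T$ and $\Delta r^2 = 2n - 2H\langle F,\nu\rangle$. Combining these I would derive a parabolic inequality for $Q := t\,u^2/r^2$ of the schematic form
\[
\mathcal{L} Q \le \frac{u^2}{r^2} - c\,t\,\frac{u^2}{r^2}\cdot\frac{|A|^2}{H^2} + (\text{gradient cross-terms}),
\]
the first term coming from the explicit $t$-derivative. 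At a first interior space-time maximum $(t_0,p_0)$ of $Q$ on $M\times(0,T]$ (the case $Q\equiv 0$ being trivial) one has $\nabla Q=0$, $\Delta Q \le 0$, $\partial_t Q \ge 0$. Convexity gives $|A|^2 \ge H^2/n$, so the dissipation term is at worst $-c\,t\, u^2/r^2$; after rearranging the inequality produced by the maximum principle, one obtains $Q(t_0,p_0)\le C(\theta_1)$, equivalently
\[
\frac{1}{H|F|}(t_0,p_0)\le C(\theta_1)\, t_0^{-1/2}.
\]
The cone hypothesis enters by supplying the pointwise lower bound $\langle F,\nu\rangle \ge c(\theta_1)\, r$ on a convex hypersurface (the outward normal has positive component along $\omega$), which is what lets one absorb the mixed gradient terms in the display above.

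\textbf{Main obstacle.} The technical heart is the computation of $\mathcal{L}(u^2/r^2)$: the cross-terms $\langle \nabla u,\nabla r\rangle$ do not have an obvious sign, and naively they can dominate the good dissipation coming from $-u^3|A|^2$. I expect the cleanest implementation replaces $r$ in the denominator by the support-like quantity $\langle F,\omega\rangle$, which by the cone hypothesis is pointwise comparable to $r$ and whose tangential gradient $\omega^T$ satisfies $|\omega^T|\le 1$; this should make the error terms at the maximum estimable in terms of $Q$ itself, closing the argument. Finding the right auxiliary function and extracting a sharp enough sign from the resulting evolution, using nothing beyond convexity and the cone, is the main place where the cone angle $\theta_1$ must enter quantitatively — and is the reason the constant $C$ is allowed to depend on $\theta_1$.
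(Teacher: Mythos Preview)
This theorem is not proved in the present paper: it is quoted from \cite{CD} and used as a black box (see the sentence immediately preceding the statement). There is therefore no ``paper's own proof'' to compare your attempt against.

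That said, your proposal has a concrete error that breaks the argument as written. Under $\mathcal{L}=\partial_t-u^2\Delta$ one has
\[
\mathcal{L}u \;=\; +\,u^3|A|^2,
\]
not $-u^3|A|^2+2u^{-1}|\nabla u|^2$. (Sanity check on a round sphere: $u=R/n$, $|A|^2=n/R^2$, $\Delta u=0$, and indeed $\partial_t u=u/n=u^3|A|^2$.) Hence the $|A|^2$ term is a \emph{growth} term, not dissipation, and the inequality $|A|^2\ge H^2/n$ pushes in the wrong direction. Your line ``the dissipation term is at worst $-c\,t\,u^2/r^2$'' therefore has the wrong sign, and the closure at the maximum does not go through. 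A second issue: the claim that the cone hypothesis yields $\langle F,\nu\rangle\ge c(\theta_1)\,r$ is not correct in general. On a convex hypersurface lying inside a cone with apex at the origin, the support function $\langle F,\nu\rangle$ can be arbitrarily small compared with $|F|$ near conical pieces (it vanishes identically on an exact cone), and the outward normal need not have positive $\omega$-component at every point.

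Your high-level plan---a scale-invariant, time-weighted test function and a parabolic maximum principle---is the right genre, and your suggestion to replace $r$ by the linear height $\langle F,\omega\rangle$ is in the right spirit. But because $\mathcal{L}u$ carries the bad sign, the actual mechanism must be that the evolution of the \emph{denominator} (or of an additional geometric factor) produces a negative term large enough to absorb $+u^3|A|^2$, with the cone condition entering precisely to make that absorption quantitative. Identifying such a weight and carrying out the cancellation is the substance of the argument in \cite{CD}; your sketch, as it stands, has not located it.
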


\section{Evolution of Singularity}

\begin{lemma}\label{main lemma}
Let $\Sigma_0^{n+1}\subset\mathbb{R}^{n+2}$ be a complete convex hypersurface which contains the origin. Moreover, suppose the tangent cone at the origin $T_0\Sigma_0$ has a smooth, strictly convex link $$\Gamma_0^n:= T_0\Sigma_0\cap \mathbb{S}^{n+1}\subset \mathbb{S}^{n+1},$$ and $B^{n+2}_1(0)\cap\Sigma_0=B^{n+2}_1(0)\cap T_0\Sigma_0$; i.e., $\Sigma_0$ is conical in the ball of radius $1$ centered at the origin.  Let  $\Sigma_t$ and $\Gamma_t$ be the IMCF starting from $\Sigma_0$ in $\mathbb{R}^{n+2}$ and $\Gamma_0$ in $\mathbb{S}^{n+1}$ respectively. Then $0\in \Sigma_t$ for $t\in[0,T_{\Gamma_0})$ and $T_0 \Sigma_t$ coincides with $\mathcal{C}\Gamma_t$.
\end{lemma}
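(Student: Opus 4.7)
I would exploit two features of the setup: convexity, which automatically forces $\hat\Sigma_0\subseteq\widehat{\mathcal{C}\Gamma_0}$, and the scaling invariance of IMCF, which makes $\lambda\Sigma_t$ the weak flow of $\lambda\Sigma_0$ for every $\lambda>0$ and therefore identifies $T_0\hat\Sigma_t$ with the increasing union $\overline{\bigcup_{\lambda>0}\lambda\hat\Sigma_t}$. The first feature supplies an upper barrier that pins the origin to the flow; the second, combined with the uniqueness of weak solutions noted after Definition \ref{weak solution}, identifies the tangent cone with $\widehat{\mathcal{C}\Gamma_t}$.

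\emph{Upper barrier and $0\in\Sigma_t$.} A convex body with $0$ on its boundary is contained in its tangent cone at $0$: for $x\in\hat\Sigma_0$, convexity gives the segment $[0,x]\subseteq\hat\Sigma_0$, so $x/|x|\in\hat\Gamma_0$ and hence $x\in\widehat{\mathcal{C}\Gamma_0}$. I would upgrade this to $\hat\Sigma_t\subseteq\widehat{\mathcal{C}\Gamma_t}$ by constructing smooth, compact, strictly convex outer barriers for $\widehat{\mathcal{C}\Gamma_0}$, obtained by truncating $\mathcal{C}\Gamma_{0,\epsilon'}$ with a large sphere and rounding the corner, arranging that each $\hat\Sigma_{0,\epsilon}$ lies inside such a barrier, then applying the avoidance principle between smooth compact classical IMCFs and sending $\epsilon,\epsilon'\to 0$ and the truncation radius to infinity. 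Since the weak flow is monotone increasing, $0\in\hat\Sigma_0\subseteq\hat\Sigma_t$; and were $0$ interior to $\hat\Sigma_t$, a ball around $0$ would by the inclusion lie in $\widehat{\mathcal{C}\Gamma_t}$, forcing $\hat\Gamma_t=\mathbb{S}^{n+1}$, which is impossible for $t<T_{\Gamma_0}$. Hence $0\in\partial\hat\Sigma_t=\Sigma_t$.

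\emph{Tangent cone identification.} By scaling invariance, $\lambda\Sigma_t$ is the weak IMCF from $\lambda\Sigma_0$ for every $\lambda>0$. Since $0\in\hat\Sigma_0$ and $\hat\Sigma_0$ is convex, $\{\lambda\hat\Sigma_0\}_\lambda$ is monotone increasing in $\lambda$ with $\overline{\bigcup_{\lambda>0}\lambda\hat\Sigma_0}=\widehat{\mathcal{C}\Gamma_0}$ by the standard tangent cone characterization, and likewise $\overline{\bigcup_\lambda\lambda\hat\Sigma_t}=T_0\hat\Sigma_t$. A diagonal choice of the bodies $\lambda_k\hat\Sigma_{0,\epsilon_k}$ provides a legitimate approximating family in the sense of Definition \ref{weak solution} for the weak IMCF of $\widehat{\mathcal{C}\Gamma_0}$, whose time-$t$ flow is $\overline{\bigcup_k\lambda_k\hat\Sigma_{t,\epsilon_k}}=T_0\hat\Sigma_t$. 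Separately, compact smooth strictly convex truncations of the smooth cones $\mathcal{C}\Gamma_{t,\epsilon}$ constitute another legitimate approximating family whose time-$t$ flow is $\widehat{\mathcal{C}\Gamma_t}$. Uniqueness of the weak solution then forces $T_0\hat\Sigma_t=\widehat{\mathcal{C}\Gamma_t}$, and passing to boundaries yields $T_0\Sigma_t=\mathcal{C}\Gamma_t$.

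\emph{Main obstacle.} The principal technical burden is the honest construction and comparison of the smooth, compact, strictly convex approximating families demanded by Definition \ref{weak solution}, since both $\widehat{\mathcal{C}\Gamma_0}$ and the outer barriers in the upper-barrier step are non-compact and vertex-singular. Verifying that simultaneous truncation at infinity and smoothing near the origin commute with the weak flow in a neighborhood of the origin on any fixed $[0,T]\subset[0,T_{\Gamma_0})$ is where the a priori estimate of Theorem \ref{thm-CD} controlling $1/(H|F|)$ would enter decisively, preventing degeneration of the flow near the singularity as the approximations are taken.
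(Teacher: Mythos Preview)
Your plan is organized differently from the paper's proof, but the analytic core is the same and you correctly locate it. The paper does \emph{not} route through the auxiliary object ``weak IMCF of $\mathcal{C}\Gamma_0$'' and then invoke uniqueness; instead it writes $\Gamma'_t:=T_0\Sigma_t\cap\mathbb{S}^{n+1}$ and proves directly that $\Gamma'_t$ is a \emph{smooth} IMCF in $\mathbb{S}^{n+1}$, after which uniqueness of the smooth spherical flow (Theorem~\ref{smooth}) forces $\Gamma'_t=\Gamma_t$. Concretely, the paper rescales the smooth approximations $\lambda_j\Sigma_{t,\epsilon_k}$, uses the conical hypothesis $B_1(0)\cap\Sigma_0=B_1(0)\cap T_0\Sigma_0$ to get locally smooth convergence at $t=0$, bounds $H$ above locally via the Daskalopoulos--Huisken estimate, bounds $H^{-1}|x|^{-1}$ via Theorem~\ref{thm-CD} (your instinct here is right), writes the flow as a graph over a tilted cylinder, and closes with parabolic regularity. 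Your scheme hides exactly this computation inside the sentence ``truncations of the smooth cones $\mathcal{C}\Gamma_{t,\epsilon}$ constitute another legitimate approximating family whose time-$t$ flow is $\widehat{\mathcal{C}\Gamma_t}$'': that is not a soft consequence of Definition~\ref{weak solution}, because the IMCF of a \emph{truncated, vertex-smoothed} cone is not a truncated cone, and one must prove that its flow agrees with $\mathcal{C}\Gamma_t$ in a neighborhood of the origin. That step is equivalent to the paper's direct argument and is where the $H$ and $H^{-1}$ bounds, the graph parametrization, and parabolic regularity all enter.

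Two smaller points. First, your upper-barrier step is more elaborate than necessary: since $\mathcal{C}\Gamma_t$ is smooth away from the origin and each $\Sigma_{t,\epsilon}$ is smooth, compact, and bounded away from the origin, the avoidance principle applies directly between $\Sigma_{t,\epsilon}$ and $\mathcal{C}\Gamma_t$, giving $\hat\Sigma_t\subset\mathcal{C}\hat\Gamma_t$ without manufacturing compact outer barriers. Second, your scaling identity $T_0\hat\Sigma_t=\overline{\bigcup_k\lambda_k\hat\Sigma_{t,\epsilon_k}}$ is correct, but note that it by itself only yields the inclusion $\hat\Gamma'_t\subset\hat\Gamma_t$ (which the paper also records); the reverse inclusion is exactly what forces the analytic work, and no amount of soft uniqueness-of-weak-solutions reasoning bypasses it.
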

\begin{proof}
For a fixed smooth approximation of $\Sigma_0$, namely $\Sigma_{0,\epsilon}$, we have $\hat{\Sigma}_{0,\epsilon}\subset\hat{\Sigma}_{0}\subset \mathcal{C}\hat{\Gamma}_0$. $\hat{\Sigma}_{t,\epsilon}\subset \mathcal{C}\hat{\Gamma}_t$ for any $0\le t<T_\Gamma$ by the avoidance principle between smooth solutions.  Thus $0\in \Sigma_t$ for $t\in[0,T_\Gamma)$ by taking limit $\epsilon\to 0$. Since IMCF preserves convexity, $\Sigma_t$ is convex and hence $T_0\Sigma_t$ is a cone. Let us denote this by $\mathcal{C}\Gamma'_t$. We have $\hat{\Gamma}_0\subset\hat{\Gamma}'_t\subset \hat{\Gamma}_t$ from $\hat{\Sigma}_0\subset\hat{\Sigma}_t\subset \mathcal{C}\hat{\Gamma}_t$. To show $\mathcal{C}\Gamma'_t=\mathcal{C}\Gamma_t$, by the uniqueness of smooth solution and $\Gamma'_0=\Gamma_0$, it suffices to prove $\Gamma'_t$ is a smooth solution of IMCF in $\mathbb{S}^{n+1}$ for $t\in[0,T_{\Gamma_0})$. \\\\
%To be more precise, let us define \[T^*= \sup \{t'\in[0,T_\Gamma)\,:\, \Gamma'_t = \Gamma_t \mbox{ for all } t \in [0,t']\}\] and show $T*=T_\Gamma$. $T^*$ is well defined nonnegative number since $\Gamma'_0=\Gamma_0$. Suppose $T^*<T_\Gamma$. First, we notice that $\Gamma_{T^*} = \Gamma'_{T^*}$. If not, $T^*>0$ and there must be a point $x\in \Gamma'_{T^*}$ such that $x\in \textup{int} \hat\Gamma_{T^*}$ and this is impossible as $x\in \textup{int}\hat \Gamma_{T^*-\epsilon}= \textup{int}\hat \Gamma'_{T^*-\epsilon}$ for a sufficiently small $\epsilon>0$ and $\hat\Gamma'_t$ is increasing family of sets in $t$. \\\\
We will show $\Gamma'_t$ is a smooth IMCF on $t\in[0,T_{\Gamma_0}-\delta)$ for all small $\delta>0$. After a rotation, we may assume that $e_{n+2} \in \text{int}\hat{\Gamma}_{0}$. By Theorem \ref{smooth}, there is $v\in \mathbb{S}^{n+1}$ such that $\Gamma_t \subset H(v):=\{v'\in \mathbb{S}^{n+1} \,|\, \langle v,v'\rangle>0 \} $ and $\Gamma_t$ converges to an equator $S(v):=\{ v' \in \mathbb{S}^{n+1} \, | \, \langle v, v' \rangle =0\}$ as $t\to T_{\Gamma_0}$. Since $\Gamma_t$ is a smooth, strictly convex solution, \[\epsilon_0 := \inf \left\{ \langle v',  v \rangle \big| v'\in{\Gamma_{T-\delta}}\right\}>0.\]
Since weak solution $\Sigma_t$ is independent of  approximation, we can choose $\Sigma_{0,\epsilon}$ so that it converges to $\Sigma_0$ locally smoothly on $B_1(0)\setminus\{0\}$. 
 Let $\{\lambda_j\} $ be a sequences of numbers which increases to infinite. It is possible to pick a decreasing sequence $\epsilon_k \to 0$ so that $\lambda_j \hat\Sigma_{0,\epsilon_k}$ locally uniformly converges to $\mathcal{C}\Gamma'_{0}$ as $j\to \infty$ and $k\geq j$. Let us denote a strip $$D_{\epsilon_0}:=\{x \in \mathbb{R}^{n+2} \, |\, \epsilon_0<\langle v,x \rangle <1\}.$$ We want to show for $t\in [0,T-\delta]$, $\displaystyle\lim_{j\to\infty}\lim_{k\to\infty}\lambda_j\Sigma_{t,\epsilon_k}\cap D_{\epsilon_0}$ converges smoothly to an (incomplete) IMCF.
From now on we assume $\lambda_i\ge3$ by assuming $i\ge i_0$. Note \begin{align*}\lambda_j\Sigma_0 \cap \{0 < x_{n+2} < 2\}= \mathcal{C}\Gamma_0 \cap\{ 0< x_{n+2} < 2\}.\end{align*} 
Due to locally smooth convergence of $\Sigma_{\epsilon,0}$, we may choose further smaller deceasing seqeunce $\epsilon_k$ and fix it so that for any $x\in D_{\epsilon_0}$ and any $k\geq j$
\begin{align*}
\sup_{B_{\epsilon_0/2}(x)\cap \lambda_j\Sigma_{\epsilon_k,0}} H \leq 2\sup_{\{|x|>{\epsilon_0}/2\}\cap \mathcal{C}\Gamma}H= C{\epsilon_0}^{-1}<\infty.
\end{align*}
Let us denote  $\Sigma_{jk,t}:=\lambda_{j} \Sigma_{t,\epsilon_k}$, which is again a smooth solution of IMCF by the scaling property. 
%Note that  $\Sigma_{j,t}\cap D_{\epsilon_0}\subset B_{{\epsilon_0}^{-1}}(0)\cap D_{\epsilon_0}$ for $t\in [0,T_{\Gamma_0}-\delta]$. 
Due to a local estimate of $H$ in Proposition 2.11 \cite{DH}, we have a uniform upper bound of $H$ for $\Sigma_{jk,t}$ in $D_{\epsilon_0}$ for $t\in[0,T_{\Gamma_0})$. Next, by the avoidance principle between $\Sigma_{jk,t}$ and $\mathcal{C} \Gamma_t$, \[\langle x, v \rangle \ge \epsilon_0 |x|\quad\text{for all }x\in \Sigma_{jk,t} \text{ with } t\in[0,T_{\Gamma_0}-\delta]. \] Hence by Theorem \ref{thm-CD}, during this time interval, there is a $ C=C(\epsilon_0)$ such that 
\begin{align*}
\frac{1}{H|x|}\leq C\left(1+t^{-1/2} \right)\quad \text{for }x\in \Sigma_{jk,t} \text{ with } t\in[0,T_{\Gamma_0}-\delta].
\end{align*}

Now we express $\Sigma_{jk,t}\cap D_{\epsilon_0}$ as a graph over a tilted cylinder as the following. For any $s\in (\epsilon_0,1)$, $\Sigma_{jk,t}\cap \{x\,|\,\langle v,x\rangle=s\}$ is a convex hypersurface in $\{x\,|\,\langle v,x\rangle=s\} \approx \mathbb{R}^{n+1}$.  Since $e_{n+2} \in \text{int}\hat{ \Gamma}_0$, for large $j$ and $k\geq j$, this convex hypersurface contatins the point $\frac{s}{\langle v, e_{n+2} \rangle} e_{n+2}$. Therefore, we may express $\Sigma_{jk,t}\cap \{\langle v,x\rangle=s\}$ in the polar coordinate centered at this point.
\begin{align*}
\{r'=r_{jk}'(t;s,\theta):\theta\in \mathbb{S}^{n}\} :=\Sigma_{jk,t}\cap \{\langle v,x\rangle=s\}.
\end{align*}
$r_{jk}'(t;.)$ is uniformly bounded from above and from below because $\hat{\Sigma}_{jk,0}\subset \hat{\Sigma}_{jk,t}\subset \mathcal{C}\hat\Gamma_t$. The spatial $C^1$ bound of $r'_{jk}$ follows directly from the convexity and previous $C^0$ bounds. The evolution of this scalar function $r'(t;s,\theta)$ is \[\partial_t r' = \frac{ \langle \frac{\partial \,}{\partial r'} , \nu \rangle}{H}= F(D^2 r', D r'  ,r ,s,\theta)\] and it is uniformly parabolic if $H$, $H^{-1}$ and spatial $C^1$ norm of $r'$ are bounded. By this assertion and estimates above, as long as $k\geq j$, $r'_{jk}(t;s,\theta)$ are uniformly bounded and satisfy a uniform parabolic equation in $(\delta,T_\Gamma-\delta)\times (\epsilon,1)\times \mathbb{S}^n$. From the parabolic regularity theory, $\displaystyle \lim_{j\to\infty}\lim_{k\to\infty}r_{jk}'(t;s,\theta)$ converges locally smoothly to $r_\infty'(t;s,\theta)$. Thus $\mathcal{C}\Gamma'_t\cap D_\epsilon$ satisfies IMCF and so does $\Gamma'_t$.

\end{proof}

\begin{theorem} \label{thm-euc}
Let $\Sigma_0^{n+1}\subset\mathbb{R}^{n+2}$ be a complete convex hypersurface which contains the origin so that the tangent cone at the origin $T_0\Sigma_0$ has a convex link $$\Gamma_0^n:= T_0\Sigma_0\cap \mathbb{S}^{n+1}\subset \mathbb{S}^{n+1}.$$ 
Let $T_{\Gamma_0}:= \ln |\mathbb{S}^{n}|-\ln |\Gamma^n_0|$. Then $0\in \Sigma_t$ for $t\in[0,T_{\Gamma_0}]$ and $0\in \text{int }\hat\Sigma_t$ for $t\in(T_{\Gamma_0},\infty)$. Suppose $T_{\Gamma_0}>0$ and $\Gamma_t\subset \mathbb{S}^{n+1}$ be the IMCF starting from $\Gamma_0$.  Then $T_0 \Sigma_t$ coincides with $\mathcal{C}\Gamma_t$  for $t\in[0,T_{\Gamma_0})$.
\end{theorem}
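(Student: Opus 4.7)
My plan is to sandwich the tangent cone $T_0\hat\Sigma_t$ between two copies of $\mathcal{C}\hat\Gamma_t$ by applying Lemma \ref{main lemma} to outer and inner conical approximations, using the avoidance principle for weak flows (which is built into Definition \ref{weak solution} via the avoidance between smooth approximations). For the outer barrier, approximate $\Gamma_0$ from outside by smooth strictly convex spherical hypersurfaces $\tilde\Gamma_{0,k}$ with $\hat\Gamma_0\subsetneq\hat{\tilde\Gamma}_{0,k}$ and $\hat{\tilde\Gamma}_{0,k}\downarrow\hat\Gamma_0$ in Hausdorff. By Lemma \ref{main lemma} applied to the cone $\mathcal{C}\tilde\Gamma_{0,k}$ (conical everywhere with smooth strictly convex link), together with scale-invariance and uniqueness of the flow, its weak IMCF is the cone family $\mathcal{C}\tilde\Gamma_{t,k}$ on $[0,T_{\tilde\Gamma_{0,k}})$. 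Since $\hat\Sigma_0\subset\mathcal{C}\hat\Gamma_0\subset\mathcal{C}\hat{\tilde\Gamma}_{0,k}$, avoidance gives $\hat\Sigma_t\subset\mathcal{C}\hat{\tilde\Gamma}_{t,k}$. Letting $k\to\infty$, $T_{\tilde\Gamma_{0,k}}\to T_{\Gamma_0}$ by the area formula, and I claim $\bigcap_k\hat{\tilde\Gamma}_{t,k}=\hat\Gamma_t$: the $\supset$ direction is avoidance, and the $\subset$ direction follows from $|\tilde\Gamma_{t,k}|=e^t|\tilde\Gamma_{0,k}|\to|\Gamma_t|$ combined with the fact that, inside a hemisphere, strict inclusion of convex bodies strictly decreases boundary area (via the $1$-Lipschitz nearest-point projection onto spherical convex sets). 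Therefore $\hat\Sigma_t\subset\mathcal{C}\hat\Gamma_t$ for $t\in[0,T_{\Gamma_0})$, giving both $0\in\Sigma_t$ and $T_0\hat\Sigma_t\subset\mathcal{C}\hat\Gamma_t$.

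\textbf{Inner barrier.} For the reverse containment, let $\Gamma_{0,\epsilon}$ be the inner approximations from Definition \ref{weak solution}. Using that $\lambda\hat\Sigma_0\to\mathcal{C}\hat\Gamma_0$ locally in Hausdorff as $\lambda\to\infty$ and that $\hat\Gamma_{0,\epsilon}\subset\subset\hat\Gamma_0$, choose $r_\epsilon>0$ with $\mathcal{C}\hat\Gamma_{0,\epsilon}\cap B_{r_\epsilon}(0)\subset\hat\Sigma_0$. Setting $\hat{\tilde\Sigma}_{0,\epsilon}:=\mathcal{C}\hat\Gamma_{0,\epsilon}\cap\hat\Sigma_0$, its boundary is a complete convex hypersurface contained in $\hat\Sigma_0$, conical in $B_{r_\epsilon}(0)$ with smooth strictly convex link $\Gamma_{0,\epsilon}$. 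Lemma \ref{main lemma}, rescaled so that $r_\epsilon$ plays the role of $1$, yields $T_0\hat{\tilde\Sigma}_{t,\epsilon}=\mathcal{C}\hat\Gamma_{t,\epsilon}$, and the avoidance $\hat{\tilde\Sigma}_{t,\epsilon}\subset\hat\Sigma_t$ forces $\mathcal{C}\hat\Gamma_{t,\epsilon}=T_0\hat{\tilde\Sigma}_{t,\epsilon}\subset T_0\hat\Sigma_t$ (tangent cones respect containment of convex bodies through $0$). Taking $\epsilon\to 0$ and using $\mathcal{C}\hat\Gamma_t=\overline{\bigcup_\epsilon\mathcal{C}\hat\Gamma_{t,\epsilon}}$ from the sphere definition, $\mathcal{C}\hat\Gamma_t\subset T_0\hat\Sigma_t$. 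Combined with the outer barrier, $T_0\hat\Sigma_t=\mathcal{C}\hat\Gamma_t$ for $t\in[0,T_{\Gamma_0})$.

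\textbf{Endpoint and post-singular behavior.} At $t=T_{\Gamma_0}$, monotonicity $0\in\hat\Sigma_s\subset\hat\Sigma_{T_{\Gamma_0}}$ together with $\hat\Sigma_{T_{\Gamma_0}}\subset\lim_{s\to T_{\Gamma_0}^-}\mathcal{C}\hat\Gamma_s=P$, the closed half-space, forces $0\in\partial\hat\Sigma_{T_{\Gamma_0}}=\Sigma_{T_{\Gamma_0}}$. For $t>T_{\Gamma_0}$, the inner-barrier monotonicity $T_0\hat\Sigma_t\supset T_0\hat\Sigma_s=\mathcal{C}\hat\Gamma_s$ for all $s<T_{\Gamma_0}$ gives $T_0\hat\Sigma_t\supset P$. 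Since any convex cone in $\mathbb{R}^{n+2}$ strictly containing a closed half-space must equal $\mathbb{R}^{n+2}$ (it contains antipodal directions, hence the line through them, and then iteratively the whole space), we have $T_0\hat\Sigma_t\in\{P,\mathbb{R}^{n+2}\}$, with the latter giving $0\in\text{int}\,\hat\Sigma_t$. The main obstacle is to rule out the degenerate $T_0\hat\Sigma_t=P$. I would handle this via the Euclidean smooth inner approximations $\Sigma_{0,\epsilon_k}$ from Definition \ref{weak solution}: the first-contact time $t_k^*:=\inf\{t:0\in\hat\Sigma_{t,\epsilon_k}\}$ satisfies $t_k^*\ge T_{\Gamma_0}$ from the outer barrier (a smooth convex body cannot touch the sharp cone tip of $\mathcal{C}\hat\Gamma_t$ while remaining inside it), and the matching upper bound $\limsup_k t_k^*\le T_{\Gamma_0}$ should follow by combining the local smooth convergence in the proof of Lemma \ref{main lemma} with Theorem \ref{smooth}'s $C^{1,\beta}$-to-equator convergence of $\Gamma_{t,\epsilon}$; once $0$ enters $\hat\Sigma_{t,\epsilon_k}$ it is automatically interior there, and so $0\in\text{int}\,\hat\Sigma_t$ follows from $\hat\Sigma_{t,\epsilon_k}\subset\hat\Sigma_t$.
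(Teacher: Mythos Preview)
Your inner barrier is essentially the paper's: build a hypersurface conical near $0$ with smooth strictly convex link $\Gamma_{0,\epsilon}$, rescale it into $\hat\Sigma_0$, and invoke Lemma \ref{main lemma}. Your outer barrier and your post-singular step, however, diverge from the paper and each carries a genuine issue.

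\textbf{Outer barrier.} You approximate $\Gamma_0$ from the \emph{outside} by smooth strictly convex $\tilde\Gamma_{0,k}$ and then squeeze with the area identity to obtain $\bigcap_k\hat{\tilde\Gamma}_{t,k}=\hat\Gamma_t$. The area squeeze is legitimate (the paper uses the same device later, in Claim \ref{blow-down}), but the approximating family need not exist: by definition, a strictly convex hypersurface in $\mathbb{S}^{n+1}$ lies compactly in an open hemisphere, whereas $\Gamma_0$ may contain antipodal points---for instance $\mathcal{C}\Gamma_0=\mathbb{R}\times\mathcal{C}\Theta_0$ with $|\Theta_0|<|\mathbb{S}^{n-1}|$, which still has $T_{\Gamma_0}>0$ by Lemma \ref{area equality}. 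The paper sidesteps this entirely with a shorter observation using only the \emph{inner} approximations $\Gamma_{0,\epsilon}$: any smooth compact $\hat\Sigma'_0\subset\subset\hat\Sigma_0$ misses the origin, so its radial projection onto $\mathbb{S}^{n+1}$ is a compact subset of $\text{int}\,\hat\Gamma_0$ and hence lies in $\hat\Gamma_{0,\epsilon}$ for $\epsilon$ small; thus $\hat\Sigma'_0\subset\mathcal{C}\hat\Gamma_{0,\epsilon}$, and avoidance against the cone flow $\mathcal{C}\Gamma_{t,\epsilon}$ (smooth away from $0$) gives $\hat\Sigma'_t\subset\mathcal{C}\hat\Gamma_{t,\epsilon}\subset\mathcal{C}\hat\Gamma_t$ directly. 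No outer spherical approximation and no area argument are needed.

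\textbf{Post-singular step.} Your plan via first-contact times $t_k^*$ of the smooth Euclidean approximations is left at the level of ``should follow,'' and the upper bound $\limsup_k t_k^*\le T_{\Gamma_0}$ is not actually extracted from the ingredients you cite. The paper's argument is both simpler and complete: when $\Gamma_0$ is smooth and strictly convex, Theorem \ref{smooth} gives $C^{1,\beta}$ convergence $\Gamma_t\to$ equator, so for every $\delta>0$ one finds a ball $B_{a_\delta}(q)\subset\hat\Sigma_{T_{\Gamma_0}}$ with $|q|=a_\delta e^{\delta/(n+1)}$; the explicit IMCF from this sphere swallows the origin by time $T_{\Gamma_0}+\delta$. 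The general $\Gamma_0$ is then reduced to this case by the very same inner comparison surfaces $a_\epsilon\hat\Sigma_{0,\epsilon}\subset\hat\Sigma_0$ you used for the inner barrier, since $T_{\Gamma_{0,\epsilon}}\downarrow T_{\Gamma_0}$.
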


\begin{proof}
We again denote by $\Gamma'_t:=T_0\Sigma_t\cap \mathbb{S}^{n+1}$ {provided $0\in \Sigma_t$}. Let $\{\Gamma_{0,\epsilon}\}_{\epsilon\in (0,\epsilon_0)}$ be a family of smooth, strictly convex hypersurfaces which approximate $\Gamma_0$ as in Definition \ref{weak solution}. For any other smooth, strictly convex hypersurface $\hat{\Sigma}'_0\subset\subset \hat{\Sigma}_0$, we have $\hat{\Sigma}'_0\subset\subset \mathcal{C}\Gamma_{0,\epsilon}$ as $\epsilon$ is small enough. Therefore from the avoidance principle between smooth solutions, we obtain that  $0\in \Sigma_t$ and $\hat{\Gamma}'_t\subset \hat{\Gamma}_t$ for $t\in[0,T_{\Gamma_0}]$. To get the other direction of inclusion, for each $\epsilon\in (0,\epsilon_0)$, let $\Sigma_{0,\epsilon}$ be a hypersurface in $\mathbb{R}^{n+2}$ which satisfies the assumptions in Lemma \ref{main lemma} with the link $\Gamma_{0,\epsilon}$. Then there exits a small number $a_\epsilon>0$ such that $a_\epsilon \hat{\Sigma}_{0,\epsilon}\subset \hat{\Sigma}_0$. By the avoidance principle and Lemma \ref{main lemma}, we obtain $\hat{\Gamma}_{t,\epsilon }\subset \hat{\Gamma}'_t$. Then the result follows by taking $\epsilon\to 0$.\\

To show that $0\in \text{int}\hat\Sigma_t$ for $t\in(T_{\Gamma_0},\infty)$, we first assume $\Gamma_0$ is smooth and strictly convex. By Theorem \ref{smooth}, $\Gamma_t$ converges in $C^{1,\beta}$ to an equator as $t\to T_{\Gamma_0}$. Thus for any $\delta>0$, there exists a small number $a_\delta>0$ such that $\hat{\Sigma}_{T_{\Gamma_0}}$ contains an $a_\delta$-radius ball with its center located in distance $a_\delta\exp\left({\delta}/(n+1)\right)$ from the origin. By running the IMCF from this sphere, we deduce $0\in\text{int}\hat{\Sigma}_t$ for any $t> T_{\Gamma_0}+\delta$. By taking $\delta\to 0$ we conclude the case in which $\Gamma_0$ is smooth and strictly convex. The case of general $\Gamma_0$ can be proved by a smooth approximation.
\end{proof}

For a point $p\in \Gamma^n\subset \mathbb{S}^{n+1}$, if $\Gamma^n$ is convex, there is a unique n-dimensional tangent cone at $p$ which is denoted by $T_p\Gamma^n$. More precisely, $\mathcal{C}\Gamma \subset \mathbb{R}^{n+2}$ is convex hypersurface in $\mathbb{R}^{n+2}$ and we define using tangent cone of $\mathcal{C}\Gamma$ at $p$ by \[T_p\Gamma^n := T_p\mathcal{C}\Gamma \cap \{ x\in \R^{n+2} \, |\, \langle x-p, p \rangle=0\} .\]
\begin{theorem}\label{thm-sph}
Let $\Gamma_0^n\subset\mathbb{S}^{n+1}$ be a closed convex hypersurface so that $p\in \Gamma_0$ has the tangent cone $T_p\Gamma_0$ with the convex link in $ \{ x\in \R^{n+2} \, |\, \langle x-p, p \rangle=0\text{ and }||x-p||=1\}:=\mathbb{S}^n_p\approx\mathbb{S}^{n}$ which is denoted by $$\Theta_0^{n-1}:= T_p\Gamma^n_0\cap \mathbb{S}^n_p.$$ 
Then $T_{\Theta_0}= \ln |\mathbb{S}^{n-1}|_{n-1}-\ln |\Theta^{n-1}_0|_{n-1}\le T_{\Gamma_0}= \ln |\mathbb{S}^n|_n-\ln|\Gamma^n_0|_n $. {The equality holds if and only if $T_p\mathcal{C}\Gamma_0 \cap \mathbb{S}^{n+1} = \Gamma_0^n$ provided $\mathcal{C}\Theta_0\neq \mathbb{R}^n$}. Moreover, $p\in \Gamma_t$ for $t\in[0,T_{\Theta_0}]$ and $p\in \text{int}\hat\Gamma_t$ for $t\in(T_{\Theta_0},T_{\Gamma_0})$. Suppose $T_{\Theta_0}>0$ and $\Theta_t$ be the IMCF starting from $\Theta_0$ in $\mathbb{S}^n_p$.  Then $T_p\Gamma^n_t\cap \mathbb{S}^n_p$ coincides with $\Theta_t$  for $t\in[0,T_{\Theta_0})$.\end{theorem}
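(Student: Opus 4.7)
The plan is to deduce this spherical statement from Theorem \ref{thm-euc} applied to the convex hypersurface $\Sigma_0:=\mathcal{C}\Gamma_0\subset \mathbb{R}^{n+2}$ at the point $p$, exploiting the correspondence $\Gamma_t\leftrightarrow \mathcal{C}\Gamma_t$ between IMCF in $\mathbb{S}^{n+1}$ and (scale-invariant) IMCF in $\mathbb{R}^{n+2}$.

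For the area inequality $T_{\Theta_0}\le T_{\Gamma_0}$, I introduce the ``tangent spherical cone from $p$'', $\Gamma_0':= T_p\mathcal{C}\Gamma_0 \cap \mathbb{S}^{n+1}$. Convexity gives $T_p\mathcal{C}\Gamma_0=\mathbb{R}\cdot p+\mathcal{C}\Theta_0$ and $\widehat{\mathcal{C}\Gamma_0}\subset \widehat{T_p\mathcal{C}\Gamma_0}$, hence $\hat\Gamma_0\subset \hat\Gamma_0'$. Parametrising $\Gamma_0'$ by $(\phi,\omega)\mapsto \cos\phi\cdot p+\sin\phi\cdot\omega$ with $\omega\in \Theta_0$, a direct warped-product area calculation yields
\[|\Gamma_0'|_n = |\Theta_0|_{n-1}\int_0^\pi \sin^{n-1}\phi\,d\phi = |\Theta_0|_{n-1}\cdot\frac{|\mathbb{S}^n|_n}{|\mathbb{S}^{n-1}|_{n-1}}.\]
The $1$-Lipschitz nearest-point projection onto $\hat\Gamma_0$ (on a hemisphere containing $\hat\Gamma_0'$) maps $\Gamma_0'$ onto $\Gamma_0$, giving $|\Gamma_0|_n\le |\Gamma_0'|_n$, which rearranges to $T_{\Theta_0}\le T_{\Gamma_0}$, with equality forcing $\Gamma_0=\Gamma_0'$.

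For the remaining assertions, I translate $p$ to the origin and apply Theorem \ref{thm-euc} to $\Sigma_0$. The tangent cone $T_p\Sigma_0=\mathbb{R}\cdot p+\mathcal{C}\Theta_0$ has link on $\{|x-p|=1\}$ equal to the spherical suspension $\tilde\Gamma_0$ of $\Theta_0$, and the same area calculation shows $|\tilde\Gamma_0|=|\Theta_0|\cdot|\mathbb{S}^n|/|\mathbb{S}^{n-1}|$, so $T_{\tilde\Gamma_0}=T_{\Theta_0}$. Using the non-compact approximation scheme from the remark after Definition \ref{weak solution} together with uniqueness of the weak solution and the cone/sphere correspondence, I identify the weak IMCF of $\Sigma_0$ with $\mathcal{C}\Gamma_t$. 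Theorem \ref{thm-euc} then yields $p\in\Sigma_t$ for $t\in[0,T_{\Theta_0}]$, $p\in\mathrm{int}\,\hat\Sigma_t$ for $t>T_{\Theta_0}$ (equivalently $p\in \Gamma_t$, resp.\ $p\in \mathrm{int}\,\hat\Gamma_t$, on $[0,T_{\Gamma_0})$), and $T_p\Sigma_t=\mathcal{C}\tilde\Gamma_t$ for $t\in[0,T_{\Theta_0})$, where $\tilde\Gamma_t$ is the weak IMCF on $\{|x-p|=1\}$ starting from $\tilde\Gamma_0$. The suspension structure is preserved under this flow: $\tilde\Gamma_0$ is invariant under the $SO(n+1)$-subgroup fixing the axis $\{\pm p\}$, so by uniqueness of the weak solution $\tilde\Gamma_t$ is invariant too, hence has the form $\{\cos\phi\cdot p+\sin\phi\cdot\omega:\omega\in\Theta_t^*,\phi\in[0,\pi]\}$ for some $\Theta_t^*\subset \mathbb{S}^n_p$; a warped-product computation (with $H_{\mathrm{susp}}=\tilde H/\sin\phi$) shows $\Theta_t^*$ is a weak IMCF with $\Theta_0^*=\Theta_0$, so $\Theta_t^*=\Theta_t$. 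Finally, scale invariance of $\Sigma_t$ forces $T_p\Sigma_t=\mathbb{R}\cdot p+\mathcal{C}\Theta_t'$ for some $\Theta_t'$; matching its link on $\{|x-p|=1\}$ with the suspension of $\Theta_t$ yields $\Theta_t'=\Theta_t$, and intersecting $T_p\mathcal{C}\Gamma_t$ with $\{\langle x-p,p\rangle=0\}$ recovers $T_p\Gamma_t\cap \mathbb{S}^n_p=\Theta_t$.

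The main technical obstacle is the suspension-preservation step together with the handling of the two singular suspension poles $\pm p$ of $\tilde\Gamma_t$ — one must verify that the warped-product IMCF calculation is compatible with the weak-solution definition by inner smooth approximations in a neighbourhood of the poles. The preliminary identification of the weak IMCF of the non-compact cone $\mathcal{C}\Gamma_0$ in $\mathbb{R}^{n+2}$ with $\mathcal{C}\Gamma_t$ (needed to legitimately invoke Theorem \ref{thm-euc}) also takes some care but should follow from the same approximation scheme and the avoidance principle.
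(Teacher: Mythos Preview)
Your overall strategy---pass to the cone $\mathcal{C}\Gamma_0$ in $\mathbb{R}^{n+2}$ and invoke Theorem~\ref{thm-euc} at the point $p$---is exactly the paper's idea, and the warped-product area identity you write down is precisely Lemma~\ref{area equality}. There are, however, two genuine problems.

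\textbf{The symmetry argument is wrong.} You claim that the suspension $\tilde\Gamma_0=\{\cos\phi\cdot p+\sin\phi\cdot\omega:\omega\in\Theta_0,\ \phi\in[0,\pi]\}$ is invariant under the $SO(n+1)$ stabiliser of the axis $\{\pm p\}$. That stabiliser acts by rotations of $p^\perp$ and sends $\tilde\Gamma_0$ to the suspension of $g\Theta_0$; unless $\Theta_0$ happens to be rotationally symmetric this is not $\tilde\Gamma_0$. So uniqueness of the weak flow gives you nothing here. The correct mechanism is not rotational symmetry but \emph{splitting}: $\mathcal{C}\tilde\Gamma_0=\mathbb{R}p\times\mathcal{C}\Theta_0$ contains a full line, so (as the paper notes and uses elsewhere) the weak IMCF splits off that $\mathbb{R}$-factor; this is what forces $\mathcal{C}\tilde\Gamma_t=\mathbb{R}p\times\mathcal{C}\Theta_t^\ast$ and then your warped-product computation finishes. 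Once you phrase it this way the pole issue you flag also disappears, since the splitting reduces matters to the weak IMCF of $\mathcal{C}\Theta_0$ in $\mathbb{R}^{n+1}$.

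\textbf{The equality case is too quick.} Your sentence ``equality forcing $\Gamma_0=\Gamma_0'$'' appeals implicitly to strict outer-minimising of convex sets in an open hemisphere, but $\Gamma_0'$ contains the antipodal pair $\pm p$ and is \emph{not} compactly contained in any open hemisphere, so that principle does not apply directly. The paper's Lemma~\ref{lem-area comparison} handles exactly this: it first reduces (via dimension reduction when $\Theta_0$ itself has antipodal points) to the case where $\Theta_0$ lies in an open hemisphere, and then uses a minimal foliation by equatorial slices to produce a strict inequality when $\Gamma_0\ne\Gamma_0'$.

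Regarding the architecture: the paper avoids your non-compact-cone step entirely by choosing a \emph{compact} convex $\Sigma_0$ that agrees with $\mathcal{C}\Gamma_0$ on $B_2(0)$, then applying Theorem~\ref{thm-euc} at \emph{two} points, $0$ and $p$. At the origin this gives $T_0\Sigma_t=\mathcal{C}\Gamma_t$; convexity then yields $\hat\Sigma_t\subset\mathcal{C}\hat\Gamma_t$, so $T_p\hat\Sigma_t\subset T_p\mathcal{C}\hat\Gamma_t$, which is one inclusion. The other inclusion comes from using the suspension $\Gamma'_t=(\mathbb{R}\times\mathcal{C}\Theta_t)\cap\mathbb{S}^{n+1}$ as an outer barrier for $\Gamma_t$. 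This two-point trick sidesteps the need to identify the weak IMCF of the full non-compact cone $\mathcal{C}\Gamma_0$ with $\mathcal{C}\Gamma_t$ before you can even begin.
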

The proof will be very similar to the proof of Theorem \ref{thm-euc}, but we need lemmas to explain relations between $\Theta_0$ and $\Gamma_0$.
\begin{lemma}\label{area equality}
Let $n\geq k\geq 0$ and $\Theta^k\subset\mathbb{S}^{k+1}\subset\mathbb{R}^{k+2}$ be a hypersurface. We denote by $\Gamma^n:=\left(\mathbb{R}^{n-k}\times\mathcal{C}\Theta^k\right)\cap \mathbb{S}^{n+1}$ (or equivalently $\mathcal{C}\Gamma=\mathbb{R}^{n-k}\times\mathcal{C}\Theta$.) Then
\begin{align*}	
\frac{|\Theta^k|_k}{|\mathbb{S}^k|_k}=\frac{|\Gamma^n|_n}{|\mathbb{S}^n|_n}.
\end{align*}
\end{lemma}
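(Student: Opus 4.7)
The plan is to compute $|\Gamma^n|_n$ and $|\mathbb{S}^n|_n$ through one common spherical-coordinate parameterization of $\mathbb{S}^{n+1}$ adapted to the splitting $\mathbb{R}^{n+2}=\mathbb{R}^{n-k}\oplus\mathbb{R}^{k+2}$, observe that the area of $\Gamma^n$ factors as $|\Theta^k|_k$ times a constant depending only on $(n,k)$, and then specialize to $\Theta^k=\mathbb{S}^k$ to identify that constant as the one appearing in $|\mathbb{S}^n|_n$. Taking the ratio then eliminates the constant.

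Concretely, I would introduce the parameterization
$$\psi\colon\mathbb{S}^{n-k-1}\times(0,\pi/2)\times\mathbb{S}^{k+1}\to\mathbb{S}^{n+1},\qquad(u,\phi,v)\mapsto(\sin\phi\cdot u,\cos\phi\cdot v),$$
which is a diffeomorphism onto the complement of the two coordinate subspheres. A direct computation gives
$$\psi^{*}g_{\mathbb{S}^{n+1}}=\sin^{2}\phi\,g_{\mathbb{S}^{n-k-1}}+d\phi^{2}+\cos^{2}\phi\,g_{\mathbb{S}^{k+1}},$$
so the round volume element pulls back to $\sin^{n-k-1}\phi\,\cos^{k+1}\phi\,dA_{u}\,d\phi\,dA_{v}$. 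The condition $(y,z)\in\Gamma^{n}$ reads $z\in\mathcal{C}\Theta^{k}$, which in the $\psi$-coordinates becomes simply $v\in\Theta^{k}$ (since $|z|=\cos\phi>0$ on the open parameter domain). Restricting the last factor from $\mathbb{S}^{k+1}$ to $\Theta^{k}$ replaces $g_{\mathbb{S}^{k+1}}$ with $g_{\Theta}$ and drops one power of $\cos\phi$ from the volume element, producing
$$dA_{\Gamma^{n}}=\sin^{n-k-1}\phi\,\cos^{k}\phi\,dA_{u}\,d\phi\,dA_{\Theta}.$$
Integration separates to give
$$|\Gamma^{n}|_{n}=|\mathbb{S}^{n-k-1}|_{n-k-1}\cdot I_{n,k}\cdot|\Theta^{k}|_{k},\qquad I_{n,k}:=\int_{0}^{\pi/2}\sin^{n-k-1}\phi\,\cos^{k}\phi\,d\phi.$$

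Applying the same identity with $\Theta^{k}=\mathbb{S}^{k}$, the equator of $\mathbb{S}^{k+1}$ (for which $\mathcal{C}\mathbb{S}^{k}$ is a coordinate hyperplane in $\mathbb{R}^{k+2}$ and hence $\Gamma^{n}$ is an equator $\mathbb{S}^{n}\subset\mathbb{S}^{n+1}$), yields
$$|\mathbb{S}^{n}|_{n}=|\mathbb{S}^{n-k-1}|_{n-k-1}\cdot I_{n,k}\cdot|\mathbb{S}^{k}|_{k}.$$
Dividing the two formulas eliminates the common factor $|\mathbb{S}^{n-k-1}|_{n-k-1}\cdot I_{n,k}$ and gives exactly the claimed equality. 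The only nontrivial step is the computation of the pulled-back metric under $\psi$, which is standard spherical-coordinate bookkeeping and should present no serious obstacle. The degenerate cases $k=n$ (tautological, since then $\mathbb{R}^{n-k}=\{0\}$ and $\Gamma^{n}=\Theta^{n}$) and $k=n-1$ (where $\mathbb{S}^{n-k-1}=\mathbb{S}^{0}$ simply contributes a factor of $2$) fit the same formula and can be checked by hand.
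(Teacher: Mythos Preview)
Your argument is correct. The paper actually states this lemma without proof, treating it as an elementary fact, so there is no ``paper's own proof'' to compare against. Your approach via the spherical-join parameterization $\psi(u,\phi,v)=(\sin\phi\,u,\cos\phi\,v)$ is the natural way to verify the identity, and the clever step of specializing to $\Theta^k=\mathbb{S}^k$ to identify the constant $|\mathbb{S}^{n-k-1}|_{n-k-1}\cdot I_{n,k}$ avoids any need to evaluate the Beta-integral explicitly. The handling of the boundary cases $k=n$ and $k=n-1$ is also correct; one could add that the measure-zero loci $\{\phi=0\}$ and $\{\phi=\pi/2\}$ excluded by the open parameter domain are harmless for the area computation, but this is routine.
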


\begin{lemma}\label{lem-area comparison}
Under the same assumptions in Theorem \ref{thm-sph}, we have
\begin{align}\label{area comparison}
\frac{|\Gamma_0|_n}{|\mathbb{S}^n|_n}\leq\frac{|\Theta_0|_{n-1}}{|\mathbb{S}^{n-1}|_{n-1}}.
\end{align}
Moreover, if $\mathcal{C}\Theta_0 \neq \mathbb{R}^{n}$, then the equality holds if and only if $\mathcal{C}\Gamma_0\cong\mathbb{R}\times \mathcal{C}\Theta_0$.
\end{lemma}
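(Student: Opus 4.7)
The plan is to identify the solid tangent cone $T_p\mathcal{C}\hat{\Gamma}_0$ as a cylindrical cone $\mathbb{R}p+\mathcal{C}\hat{\Theta}_0$, sandwich $\hat{\Gamma}_0$ inside its spherical link, and then invoke Lemma \ref{area equality}.

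First I would analyze the tangent cone at $p$. Since $\mathcal{C}\hat{\Gamma}_0\subset\mathbb{R}^{n+2}$ is a closed convex cone from the origin and $p\in\Gamma_0\subset\partial\mathcal{C}\hat{\Gamma}_0$, scale invariance puts the entire line $\mathbb{R}p$ on the boundary, so the solid tangent cone $T_p\mathcal{C}\hat{\Gamma}_0$ contains $\mathbb{R}p$. The structure theorem for closed convex cones then splits it as
\[
T_p\mathcal{C}\hat{\Gamma}_0 \;=\; \mathbb{R}p \;\oplus\; K,
\]
where $K\subset p^\perp$ is a closed convex cone. Unwinding the definitions of $T_p\Gamma_0$ and $\Theta_0=T_p\Gamma_0\cap\mathbb{S}^n_p$, and identifying $p^\perp\cong\mathbb{R}^{n+1}$ via translation by $-p$, one checks that $K$ coincides with the solid cone $\mathcal{C}\hat{\Theta}_0$. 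Convexity of $\mathcal{C}\hat{\Gamma}_0$ yields the key containment
\[
\mathcal{C}\hat{\Gamma}_0 \;\subset\; T_p\mathcal{C}\hat{\Gamma}_0 \;=\; \mathbb{R}p+\mathcal{C}\hat{\Theta}_0.
\]

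Setting $\hat{\Gamma}':=(\mathbb{R}p+\mathcal{C}\hat{\Theta}_0)\cap\mathbb{S}^{n+1}$ and $\Gamma':=\partial\hat{\Gamma}'$, intersecting with $\mathbb{S}^{n+1}$ gives $\hat{\Gamma}_0\subset\hat{\Gamma}'$, and Lemma \ref{area equality} with $k=n-1$ computes $|\Gamma'|_n/|\mathbb{S}^n|_n=|\Theta_0|_{n-1}/|\mathbb{S}^{n-1}|_{n-1}$. Thus \eqref{area comparison} reduces to the spherical boundary-area monotonicity $|\Gamma_0|_n\leq|\Gamma'|_n$ for nested convex bodies. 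This follows from the standard fact that the nearest-point projection $\pi\colon\hat{\Gamma}'\to\hat{\Gamma}_0$ is a well-defined 1-Lipschitz retraction (Remark \ref{antipodal_pts} places everything in a closed hemisphere, and the hypothesis $\mathcal{C}\Theta_0\neq\mathbb{R}^n$ keeps $\hat{\Gamma}'$ proper), whose restriction is a 1-Lipschitz surjection $\Gamma'\twoheadrightarrow\Gamma_0$.

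For the equality case, $|\Gamma_0|_n=|\Gamma'|_n$ forces $\pi|_{\Gamma'}$ to be an isometry, hence $\hat{\Gamma}_0=\hat{\Gamma}'$, so $\mathcal{C}\hat{\Gamma}_0=\mathbb{R}p+\mathcal{C}\hat{\Theta}_0\cong\mathbb{R}\times\mathcal{C}\hat{\Theta}_0$, i.e., $\mathcal{C}\Gamma_0\cong\mathbb{R}\times\mathcal{C}\Theta_0$. The reverse implication is immediate from Lemma \ref{area equality}. The main technical obstacle I anticipate is the strict half of the spherical monotonicity: that $\hat{\Gamma}_0\subsetneq\hat{\Gamma}'$ already implies $|\Gamma_0|_n<|\Gamma'|_n$. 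The Euclidean analogue is classical (Cauchy's width formula, or a direct contraction argument showing $\pi$ strictly contracts near some point of $\hat{\Gamma}'\setminus\hat{\Gamma}_0$), and the spherical version should follow by the same strategy, but the spherical Jacobian calculation and the degenerate case where $\hat{\Gamma}'$ touches a great subsphere warrant some care.
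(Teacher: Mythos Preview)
Your setup matches the paper exactly: both introduce $\Gamma':=(\mathbb{R}p\times\mathcal{C}\Theta_0)\cap\mathbb{S}^{n+1}$, use convexity to get $\hat\Gamma_0\subset\hat\Gamma'$, invoke Lemma~\ref{area equality} for $|\Gamma'|_n/|\mathbb{S}^n|_n=|\Theta_0|_{n-1}/|\mathbb{S}^{n-1}|_{n-1}$, and then appeal to the outer-area-minimizing property of convex hypersurfaces in a hemisphere to conclude \eqref{area comparison}. So for the inequality itself the two arguments coincide.

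The gap is in your treatment of the equality case, and it is more than the technical obstacle you flag. First, your appeal to Remark~\ref{antipodal_pts} is misplaced: since $0\in\mathcal{C}\hat\Theta_0$, the cylinder $\mathbb{R}p+\mathcal{C}\hat\Theta_0$ contains the full line $\mathbb{R}p$, so $\hat\Gamma'$ always contains the antipodal pair $\{p,-p\}$ and is \emph{never} in an open hemisphere. The nearest-point projection $\pi:\hat\Gamma'\to\hat\Gamma_0$ is therefore not obviously well-defined or $1$-Lipschitz near $-p$. Second, even where $\pi$ is defined, the implication ``$|\Gamma_0|_n=|\Gamma'|_n$ $\Rightarrow$ $\pi|_{\Gamma'}$ is an isometry $\Rightarrow$ $\hat\Gamma_0=\hat\Gamma'$'' is not justified; equality of total area gives Jacobian $=1$ a.e.\ via the area formula, but promoting this to $\hat\Gamma_0=\hat\Gamma'$ still requires an argument, and the antipodal point sits exactly where you would need it.

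The paper resolves this quite differently. Assuming $\mathcal{C}\Gamma_0\neq\mathbb{R}\times\mathcal{C}\Theta_0$, it first observes that $-p\notin\Gamma_0$ (otherwise convexity would force $\mathcal{C}\Gamma_0=\mathbb{R}\times\mathcal{C}\Theta_0$). After a dimension reduction so that $\Theta_0$ lies compactly in an open hemisphere $\mathbb{S}^n_+$, one can then tilt: there exists $\epsilon_0>0$ with $\hat\Gamma_0\subset\{x:\langle x,v(\epsilon_0)\rangle\ge0\}$ where $v(\epsilon_0)=\sin\epsilon_0\,p+\cos\epsilon_0\,e_{n+2}$. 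Setting $\hat\Gamma''_0:=\hat\Gamma'_0\cap\{\langle x,v(\epsilon_0)\rangle\ge0\}$ gives an intermediate body with $\hat\Gamma_0\subset\hat\Gamma''_0$, hence $|\Gamma_0|_n\le|\Gamma''_0|_n$ by outer-minimizing. The strict inequality $|\Gamma''_0|_n<|\Gamma'_0|_n$ is then obtained by a calibration argument: the wedge $\mathcal{D}=\overline{\hat\Gamma'_0\setminus\hat\Gamma''_0}$ is foliated by pieces of totally geodesic equators $\{\langle x,v(\epsilon)\rangle=0\}$, and applying the divergence theorem to the unit normal of this foliation shows that the flat face $\partial_-\mathcal{D}$ has strictly smaller area than the curved face $\partial_+\mathcal{D}\subset\Gamma'_0$. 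This bypasses any projection near $-p$ entirely.
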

\begin{proof}[Proof of Lemma \ref{lem-area comparison}]
Without loss of generality, we can assume that $p=e_1$. Let $\Gamma_0':=\left(\mathbb{R}\times \mathcal{C}\Theta_0\right)\cap \mathbb{S}^{n+1}$. We have $\mathcal{C}\hat\Gamma_0\subset \mathcal{C}\hat\Gamma'_0$ from the convexity of $\Gamma_0$. Since convex hypersurfaces are outer area minimizing in a hemisphere, \eqref{area comparison} follows from {Lemma \ref{area equality}}. It's obvious that $\mathcal{C}\Gamma_0=\mathbb{R}\times \mathcal{C}\Theta_0$ implies the equality in \eqref{area comparison}. From now on, we assume $\mathcal{C}\Gamma_0\neq \mathbb{R}\times \mathcal{C}\Theta_0$. If $-e_1\in \Gamma_0$ then $\mathcal{C}\Gamma_0= \mathbb{R}\times \mathcal{C}\Theta_0$ from the convexity of $\Gamma_0$. Therefore, we have $-e_1 \notin \Gamma_0$.\\

If we first assume that $\Theta_0$ is compactly contained in some hemisphere, say in $\mathbb{S}^n_+$, then there exists a small positive number $\epsilon_0>0$ such that $\left\langle x,v(\epsilon_0) \right\rangle\geq 0$ for all $x\in \Gamma_0$, where $v(\epsilon_0):=\sin\epsilon_0\, e_1+\cos \epsilon_0\,e_{n+2}$. Define
\begin{align*}
\hat{\Gamma}''_0:=&\hat{\Gamma}'_0\cap \left\{x\in\mathbb{S}^{n+1}\Big|\ \left\langle x,v(\epsilon_0) \right\rangle\geq 0 \right\}\ \text{and}\ \Gamma''_0:=\partial \hat{\Gamma}''_0.
\end{align*}
We note that $\hat{\Gamma}_0\subset \hat{\Gamma}''_0$ and hence $|\Gamma_0|_n\leq |\Gamma''_0|_n$ since convex hypersurfaces are outer area minimizing in a hemisphere. Denote by $\mathcal{D}:=\overline{\hat{\Gamma}'_0\setminus \hat{\Gamma}''_0} $. The boundary of $\mathcal{D}$ consists two parts $\partial_+\mathcal{D}$ and $\partial_-\mathcal{D}$, where
\begin{align*}
\partial_+\mathcal{D}:=&\Gamma'_0\setminus \left\{x\in\mathbb{S}^{n+1}\Big|\ \left\langle x,v(\epsilon_0) \right\rangle> 0 \right\},\\
\partial_-\mathcal{D}:=&\hat{\Gamma}'_0\cap\left\{x\in\mathbb{S}^{n+1}\Big|\ \left\langle x,v(\epsilon_0) \right\rangle= 0 \right\}.
\end{align*} 
Moreover, 
\begin{align*}
|\Gamma'_0|_n-|\Gamma''_0|_n=|\partial_+\mathcal{D}|_n-|\partial_-\mathcal{D}|_n.
\end{align*}
In the region $\mathcal{D}$, there is a minimal foliation \[\mathcal{D}\cap\left\{x\in\mathbb{S}^{n+1}\Big|\ \left\langle x,v(\epsilon) \right\rangle= 0 \right\}_{\epsilon\in (0,\epsilon_0)}.\] Note that $\partial_- \mathcal{D}$ belongs to this minimal foliation and $\partial_+\mathcal{D}$ doesn't. Therefore by applying the divergence theorem to the normal vector of the foliation in the region $\mathcal{D}$, we obtain $|\Gamma_0|_n\leq |\Gamma''_0|_n<|\Gamma'_0|_n$.\\

For general $\Theta_0$, from Remark \ref{antipodal_pts}, we have $\mathcal{C}\Theta_0=\mathbb{R}^{k}\times \mathcal{C}\Theta'_0$, for some $\Theta'_0$ compactly contained in $\mathbb{S}^{n-k}_+$. By the assumption $\mathcal{C}\Theta_0\neq \mathbb{R}^n$, $k\leq n-1$. Denote by $\pi_k:\mathbb{R}^{n+2}\longrightarrow \mathbb{R}^{n-k+2}$ the projection which suppresses the second to the $k+1$-th coordinate. Then by applying the above argument to $\pi_k \left(\mathcal{C}\Gamma_0\right)$, we obtain
\begin{align*}
\frac{|\Gamma_0|_n}{|\mathbb{S}^n|_n}\leq &\frac{|\pi_k \left(\mathcal{C}\Gamma_0\right)\cap \mathbb{S}^{n+1-k}|_{n-k}}{|\mathbb{S}^{n-k}|_{n-k}}\\
< &\frac{|\Theta'_0|_{n-k-1}}{|\mathbb{S}^{n-k-1}|_{n-k-1}}\\
= &\frac{|\Theta_0|_{n-1}}{|\mathbb{S}^{n-1}|_{n-1}}.
\end{align*}
\end{proof}
In previous lemma, we didn't figure out the equality case of \eqref{area comparison} when $\mathcal {C}\Theta_0=\mathbb{R}^{n}$. In this case we get \[\frac{|\Gamma_0|_n}{|\mathbb{S}^n|_n}=\frac{|\Theta_0|_{n-1}}{|\mathbb{S}^{n-1}|_{n-1}}= \frac{|\mathbb{S}^{n-1}|_{n-1}}{|\mathbb{S}^{n-1}|_{n-1}}=1.\] 
This is of a separate interest to classify such $\Gamma_0$ and the following proves this: 
\begin{lemma} \label{wedge}If $|\Gamma_0 |_n= |\mathbb{S}^n|_n$, then up to an isometry of $\mathbb{S}^{n+1}$, $\Gamma_0$ is either an equator or an wedge \[W_{\theta_0}=\mathbb{S}^{n+1}\cap\bigl(\{ (r\sin\theta, r\cos\theta)\,:\,  \theta\in \{0,\theta_0\}, \text{ and }r>0 \}\times \mathbb{R}^n\bigr)\] for some $\theta_0\in(0,\pi)$.
\begin{proof}[Proof of Lemma \ref{wedge}]
 There is a point $q\in\Gamma_0$ such that the link generated at $q$, say $\bar \Theta_0= T_q \Gamma_0^n \cap \mathbb{S}^n_q$, satisfies $\mathcal{C}\Gamma_0\cong \mathbb{R} \times \mathcal{C}\bar\Theta_0$. If there is no such point, this means $\Gamma_0$ has no pair of antipodal points and thus complactly included in an open hemisphere from Remark \ref{antipodal_pts}. Then $|\Gamma_0|_n$ has to be strictly less than $|\mathbb{S}^n|_n$ as $\Gamma_0$ is outer area minimizing and contained in an geodesic ball whose radius is strictly less than $ \pi/2$. This is a contradiction. We can actually apply the same argument to $\Theta_0^{n-1} \subset \mathbb{S}^n$ and repeat this until the end. 
\end{proof}
\end{lemma}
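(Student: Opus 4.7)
The plan is to induct on $n$, using the equality $|\Gamma_0|_n = |\mathbb{S}^n|_n$ at each stage to split off an $\mathbb{R}$-factor from $\mathcal{C}\hat\Gamma_0$ and recurse on a lower-dimensional convex link in $\mathbb{S}^n$, until the one-dimensional base case is reached.

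For the inductive step, I would first extract an antipodal pair $\pm q \in \Gamma_0$. If no such pair existed, Remark \ref{antipodal_pts} would place $\Gamma_0$ compactly in an open hemisphere, hence inside some closed geodesic ball of radius strictly less than $\pi/2$. Since convex hypersurfaces on the sphere are outer area minimizing (the fact used already in the proof of Lemma \ref{lem-area comparison}), $|\Gamma_0|_n$ would be at most the area of the enclosing geodesic sphere, which is strictly smaller than $|\mathbb{S}^n|_n$, contradicting the hypothesis.

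Once $\pm q \in \Gamma_0$ is fixed, the convex cone $\mathcal{C}\hat\Gamma_0$ contains the line $\mathbb{R} q$ and therefore splits orthogonally as $\mathcal{C}\hat\Gamma_0 = \mathbb{R} q \oplus \mathcal{C}\hat{\bar\Theta}_0$, where $\bar\Theta_0 := T_q\Gamma_0 \cap \mathbb{S}^n_q$ is a closed convex hypersurface in $\mathbb{S}^n_q \cong \mathbb{S}^n$. Lemma \ref{area equality} (applied with $k=n-1$) then yields
\[
\frac{|\bar\Theta_0|_{n-1}}{|\mathbb{S}^{n-1}|_{n-1}} = \frac{|\Gamma_0|_n}{|\mathbb{S}^n|_n} = 1,
\]
so $\bar\Theta_0$ satisfies the hypothesis one dimension lower. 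By induction $\bar\Theta_0$ is, up to isometry, either an equator of $\mathbb{S}^n_q$ or a wedge $W_{\theta_0}$. In the equator case, $\mathcal{C}\hat{\bar\Theta}_0$ is a closed half-space of $\mathbb{R}^{n+1}$ and $\mathcal{C}\hat\Gamma_0$ is a closed half-space of $\mathbb{R}^{n+2}$, so $\Gamma_0$ is an equator. In the wedge case, $\mathcal{C}\hat{\bar\Theta}_0 \cong \mathbb{R}^{n-1} \times D_{\theta_0}$ for the planar dihedral sector $D_{\theta_0}$, so $\mathcal{C}\hat\Gamma_0 \cong \mathbb{R}^n \times D_{\theta_0}$, which is exactly $\mathcal{C}\hat W_{\theta_0}$ in $\mathbb{R}^{n+2}$, giving $\Gamma_0 \cong W_{\theta_0}$ in $\mathbb{S}^{n+1}$.

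The base case $n=1$ asks to classify a closed convex curve $\Gamma_0 \subset \mathbb{S}^2$ with length $2\pi$. The same antipodal argument gives $\pm q \in \Gamma_0$; then $\mathcal{C}\hat\Gamma_0 \subset \mathbb{R}^3$ is a three-dimensional closed convex cone containing the line $\mathbb{R} q$. Such cones are classified as either a closed half-space (whence $\Gamma_0$ is an equator) or a closed dihedral wedge of opening $\theta_0 \in (0,\pi)$ (whence $\Gamma_0 = W_{\theta_0}$); the degenerate possibility that $\mathcal{C}\hat\Gamma_0$ reduces to the line $\mathbb{R} q$ is excluded by $|\Gamma_0|_1 = 2\pi > 0$. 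The only structurally delicate point throughout is justifying the orthogonal splitting whenever a closed convex cone contains a full line, which is a standard fact of convex geometry; everything else is a clean recursive descent.
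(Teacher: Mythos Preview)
Your proof is correct and follows essentially the same route as the paper: both arguments use Remark \ref{antipodal_pts} together with the outer-area-minimizing property to force an antipodal pair, split the convex cone along the resulting line, and then recurse on the lower-dimensional link. Your write-up is simply more explicit---you set up the induction formally, invoke Lemma \ref{area equality} to propagate the area equality, and treat the base case $n=1$ separately---whereas the paper compresses the recursion into the single phrase ``apply the same argument to $\Theta_0^{n-1}\subset\mathbb{S}^n$ and repeat.''
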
 

\begin{proof}[Proof of Theorem \ref{thm-sph}]
%Without loss of generality, we can assume $p=(1,0,0,\dots, 0)\in \mathbb{R}^{n+2}$. Denote by $\mathcal{C}\Theta_0\subset \mathbb{R}^{n+1}$ the cone generated by $\Theta^n_0$. From the convexity of $\Gamma^n_0$, $\mathcal{C}\Gamma_0$ is contained inside of $\mathbb{R}\times \mathcal{C}\Theta_0$. Let $\tilde{\Gamma}^n_0:=\left(\mathbb{R}\times \mathcal{C}\Theta_0\right)\cap \mathbb{S}^{n+1}$ and $\tilde{\Gamma}_t$ be the IMCF starting from $\tilde{\Gamma}_0$ in $\mathbb{S}^{n+1}$. It's straightforward to show that $|\Theta_0^{n-1}|_{n-1}/|\mathbb{S}^{n-1}|_{n-1}=|\tilde{\Gamma}_0^{n}|_{n}/|\mathbb{S}^{n}|_{n}$. Then $T_{\Theta_0}\leq T_{\Gamma_0}$ follows from the fact that convex hypersurfaces are outer area minimizing in a hemisphere. \ {\color{blue}proof for the equality case is needed.} 
We again assume $p=e_1$ without loss of generality. The inequality $T_{\Theta_0}\leq T_{\Gamma_0}$ and the equality case follow directly from the previous lemma. Denote by ${\Gamma'}_t:=\left(\mathbb{R}\times \mathcal{C}\Theta_t\right)\cap \mathbb{S}^{n+1}$. Then we deduce $\hat{\Gamma}_0\subset \hat{\Gamma}'_0$ from the convexity of $\Gamma_0$. Furthermore, $\Gamma'_t$ is a IMCF. Therefore $T_p\hat{\Gamma}^n_t\cap \mathbb{S}^n_p$ is contained in $\hat{\Theta}_t$ from the avoidance principle and thus $p\in \Gamma_t$ for $t\in [0,T_{\Theta_0}]$. To get the other direction of inclusion, let $\Sigma^n_0\subset \mathbb{R}^{n+2}$ be a closed convex hypersurface which contains the origin and satisfies
\begin{align*}
B^{n+2}_2 (0)\cap\Sigma_0=B^{n+2}_2 (0)\cap\mathcal{C}\Gamma_0.
\end{align*}
Denote by $\Sigma_t$ the IMCF starting from $\Sigma_0$. From Theorem \ref{thm-euc} we have for any $t\in [0, T_{\Theta_0})$, $T_0\Sigma_t=\mathcal{C}\Gamma_t$ and $T_p\Sigma_t=\mathcal{C}\Gamma'_t$. Therefore by the convexity of $\Sigma_t$, $\hat{\Theta}_t=T_p\hat{\Sigma}_t\cap \mathbb{S}^n_p \subset T_p \mathcal{C}\hat{\Gamma}_t\cap \mathbb{S}^n_p=T_p\hat{\Gamma}_t\cap \mathbb{S}^n_p.$ The proof of $p\in \text{int}\hat{\Gamma}_t$ for $t\in (T_{\Theta_0},T_{\Gamma_0})$ is similar to the one for Theorem \ref{thm-euc}.
\end{proof}

\begin{remark}[Non-compact flow] \label{non-compact} As we intended, the result shown up to this point applies to non-compact solutions as well. Here we give a description of non-compact flow in $\mathbb{R}^{n+2}$ by collecting results from \cite{CD}. 
 
 Suppose $\Sigma^{n+1}_0=\partial\hat \Sigma_0$ is complete non-compact convex hypersurface in $\mathbb{R}^{n+2}$ and assume $0\in \hat\Sigma_0$ for a simplicity. Then we have a unique tangent cone  at infinity, namely $\mathcal{C}\Gamma_0$, which is obtained by taking a blow-down: $\mathcal C \hat \Gamma_0= \cap_{\epsilon>0} \epsilon\hat \Sigma_0$ and $\mathcal{C}\Gamma_0 = \partial (\mathcal{C}\hat \Gamma_0)$. Similarly, let us denote $\mathcal{C}\Gamma_t=\partial(\mathcal{C} \hat \Gamma_t)$ by the tangent cone at infinity of $\Sigma_t$. It should be noted that $\Gamma_t$ may fail to be a convex `hypersurface' as $\hat \Gamma_t$ can be an arbitrary convex set in $\mathbb{S}^{n+1}$.

 In \cite{CD}, the first author and P. Daskalopoulos shows an existence of classical solution under an additional assumption that $\Sigma_0$ is locally $C^{1,1}$.  The construction of solution in \cite{CD} is the same as Definition \ref{weak solution} and hence our result gives a refinement of Choi-Daskalopoulos's result.  The following result from \cite{CD} also holds without $C^{1,1}_{loc}$ assumption as its proof directly applies to the weak solution without any changes: there is an existence of non-empty weak solution $\Sigma_t= \partial \hat \Sigma_t$ up to time $t<T^*= \ln |\mathbb{S}^n| - \ln P(\hat \Gamma_0) \in [0,\infty]$ where \[\begin{aligned} P(\hat \Gamma_0) :&= \text{the perimeter of  }\hat\Gamma_0\text{ in }\mathbb{S}^{n+1}\\ &=\begin{cases} \begin{aligned} &|\Gamma_0| &&\text{if } \text{int} (\hat\Gamma_0) \text{ is non-empty and hence }\Gamma_0\text{ is  a hypersurface}\\ 2&|\Gamma_0|&&\text{otherwise}.\end{aligned}\end{cases} \end{aligned}\] $T^*$ is maximal time in the sense that, for $t>T^*$, $\hat \Sigma_t =\mathbb{R}^{n+2}$ and thus $\Sigma_t$ is empty. If $|\Gamma_0|>0$, it was also shown that $\Gamma_t$ becomes a convex hypersurface for $t>0$. 
\end{remark}
 
Next, though it was obviously expected, Choi-Daskalopoulos \cite{CD} could not conclude $\Gamma_t$ evolves by IMCF in $\mathbb{S}^{n+1}$ as there was no notion of weak solution. We can now prove this assertion.
\begin{claim}\label{blow-down}
The blow-down cone at infinity $\Gamma_t$ evolves by the weak IMCF for $t\in(0,T^*)$.
\end{claim}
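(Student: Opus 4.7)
The plan is to exploit the scale-invariance of the IMCF in $\mathbb{R}^{n+2}$ together with Theorem~\ref{thm-euc}. For each $\lambda>0$, $\lambda\Sigma_t$ is another weak solution starting from $\lambda\Sigma_0$; since $0\in\hat\Sigma_0$ the family $\{\lambda\hat\Sigma_0\}_{\lambda>0}$ decreases to $\mathcal{C}\hat\Gamma_0$ as $\lambda\downarrow 0$, and weak avoidance propagates this monotonicity to give $\mathcal{C}\hat\Gamma_t=\bigcap_{\lambda>0}\lambda\hat\Sigma_t$. Let $\bar\Gamma_t$ denote the weak IMCF on $\mathbb{S}^{n+1}$ starting from $\Gamma_0$, which is well-defined on $[0,T^*)$ because $T^*\le T_{\Gamma_0}$. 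I would show $\hat\Gamma_t=\hat{\bar\Gamma}_t$ by two inclusions. For $\hat{\bar\Gamma}_t\subset\hat\Gamma_t$, I would take inner approximations $\Gamma_{0,\epsilon}\uparrow\Gamma_0$ from Definition~\ref{weak solution}; since $\mathcal{C}\hat\Gamma_0$ is the recession cone of $\hat\Sigma_0$, one has $\mathcal{C}\hat\Gamma_{0,\epsilon}\subset\mathcal{C}\hat\Gamma_0\subset\hat\Sigma_0$. Applying Theorem~\ref{thm-euc} to the conical hypersurface $\mathcal{C}\Gamma_{0,\epsilon}$ shows that its weak IMCF is the cone $\mathcal{C}\Gamma_{t,\epsilon}$, with $\Gamma_{t,\epsilon}$ the smooth spherical flow supplied by Theorem~\ref{smooth}. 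Weak avoidance then gives $\mathcal{C}\hat\Gamma_{t,\epsilon}\subset\hat\Sigma_t$, so by scale-invariance of the cone, $\mathcal{C}\hat\Gamma_{t,\epsilon}\subset\bigcap_{\lambda>0}\lambda\hat\Sigma_t=\mathcal{C}\hat\Gamma_t$, and $\epsilon\to 0$ finishes this direction.

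For the reverse $\hat\Gamma_t\subset\hat{\bar\Gamma}_t$, I would instead take smooth strictly convex outer approximations $\Gamma'_{0,\epsilon}\downarrow\Gamma_0$ on $\mathbb{S}^{n+1}$, so that $\mathcal{C}\hat\Gamma_0$ lies strictly in the angular interior of $\mathcal{C}\hat\Gamma'_{0,\epsilon}$. Since $\Sigma_0$ is asymptotic to its recession cone outside any sufficiently large ball, I would choose a translation vector $x_\epsilon\in\mathbb{R}^{n+2}$ in an interior direction of $\mathcal{C}\hat\Gamma'_{0,\epsilon}$, with $|x_\epsilon|$ large enough that $\hat\Sigma_0+x_\epsilon\subset\mathcal{C}\hat\Gamma'_{0,\epsilon}$. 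Translation-invariance of the IMCF then identifies $\hat\Sigma_t+x_\epsilon$ with the weak IMCF of $\hat\Sigma_0+x_\epsilon$, and weak avoidance against the cone flow $\mathcal{C}\Gamma'_{t,\epsilon}$ produced by Theorem~\ref{thm-euc} yields $\hat\Sigma_t+x_\epsilon\subset\mathcal{C}\hat\Gamma'_{t,\epsilon}$. Scaling by $\lambda\downarrow 0$, the left side Hausdorff-converges to $\mathcal{C}\hat\Gamma_t$ (because $\lambda x_\epsilon\to 0$) while the right side, being a cone, is preserved, so $\mathcal{C}\hat\Gamma_t\subset\mathcal{C}\hat\Gamma'_{t,\epsilon}$. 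Sending $\epsilon\to 0$ and using $\hat\Gamma'_{t,\epsilon}\downarrow\hat{\bar\Gamma}_t$ by uniqueness of the weak spherical IMCF completes the argument.

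The hard part will be the outer step. Constructing $x_\epsilon$ relies on the quantitative asymptotic fact that $\hat\Sigma_0$ is contained in every positive angular thickening of $\mathcal{C}\hat\Gamma_0$ outside a sufficiently large ball, so that a sufficiently large translation in an interior direction of the slightly larger cone $\mathcal{C}\hat\Gamma'_{0,\epsilon}$ can absorb the remaining bounded portion of $\hat\Sigma_0$. One must also verify that the weak spherical IMCF equals the decreasing limit of its smooth outer approximations, which follows from the avoidance-based uniqueness of the weak solution noted after Definition~\ref{weak solution}. With these two ingredients, the two inclusions together give $\hat\Gamma_t=\hat{\bar\Gamma}_t$, which is the content of the claim.
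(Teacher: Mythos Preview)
Your overall strategy---squeeze $\hat\Gamma_t$ between inner and outer smooth cone barriers, using a translation for the outer one---is exactly the paper's approach. However, there is a genuine gap in your final step. You assert that $\hat\Gamma'_{t,\epsilon}\downarrow\hat{\bar\Gamma}_t$ ``follows from the avoidance-based uniqueness of the weak solution noted after Definition~\ref{weak solution}.'' That remark only establishes that the weak solution is independent of the choice of \emph{inner} approximations; it says nothing about outer ones. Avoidance gives you $\hat{\bar\Gamma}_t\subset\bigcap_{\epsilon>0}\hat\Gamma'_{t,\epsilon}$, but the reverse inclusion requires a separate argument. The paper closes this gap with an area computation: both $\partial\bigl(\overline{\cup_{\epsilon<0}\hat\Gamma_{t,\epsilon}}\bigr)$ and $\partial\bigl(\cap_{\epsilon>0}\hat\Gamma_{t,\epsilon}\bigr)$ have area $e^{t-t_0}|\Gamma_{t_0}|$ (by continuity of the area of the approximating links at $t_0$ together with exponential area growth along each smooth flow), and then the \emph{strict} outer-area-minimizing property of closed convex sets in an open hemisphere forces the two nested convex sets to coincide. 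Without this step, the sandwich does not close.

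There is also a secondary issue with launching the argument at $t=0$. As noted in Remark~\ref{non-compact}, the blow-down link $\Gamma_0$ need not be a hypersurface in $\mathbb{S}^{n+1}$ at all, so Definition~\ref{weak solution} may not apply to it; and if $\Gamma_0$ contains a pair of antipodal points it lies in no open hemisphere, so smooth strictly convex outer approximations $\Gamma'_{0,\epsilon}$ (which by definition must sit in an open hemisphere) do not exist. The paper sidesteps both problems by fixing an arbitrary $t_0\in(0,T^*)$, invoking the result from \cite{CD} that $\Gamma_{t_0}$ is a genuine convex hypersurface for $t_0>0$, and first performing a dimension reduction to split off any line factor before constructing the inner and outer approximations at time $t_0$.
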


\begin{proof}[Proof of Claim] 
Let $t_0 \in (0,T^*)$ be a given time. It suffices to show the claim for $t\in[t_0,T^*)$. Suppose $\Gamma_{t_0}$ has a pair of antipodal points, then $\Sigma_{t_0}$ contains an infinite line and this implies $\Sigma_{t_0}$ splits in the direction of the line. It is not hard to check that the weak solution $\Gamma_t$ also splits in the same direction for $t>t_0$. We can now apply dimension reduction and we may assume  $\Gamma_{t_0}$ has no pair of antipodal points in $\mathbb{S}^{n+1}$. Therefore $\Gamma_{t_0}$ is contained in an open hemisphere by Remark \ref{antipodal_pts}.

Next, we can find two families of convex hypersurfaces in $\mathbb{S}^{n+1}$ which correspond to monotone smooth strictly convex approximations from inside and outside. We denote them by $\{\Gamma_{t_0,\epsilon} \}_{\epsilon<0}$ and $\{\Gamma_{t_0,\epsilon} \}_{\epsilon>0}$, respectively. To be precise, we have $\{\Gamma_{t_0,\epsilon}\}_{\epsilon\in(-\delta,0)\cup(0,\delta)}$ with $\partial \hat \Gamma_{t_0,\epsilon} = \Gamma_{t_0,\epsilon}$ having the following properties: \begin{enumerate}\item $\hat\Gamma_{t_0,\epsilon_1} \subset \subset \hat\Gamma_{t_0,\epsilon_2}$ if $\epsilon_1<\epsilon_2$ 
 \item There is a hemisphere which contains $\Gamma_{t_0,\epsilon}$ for all $\epsilon \in (-\delta,0)\cup(0,\delta)$.
 \item Each $\Gamma_{t_0,\epsilon}$ is smooth strictly convex hypersurface in $\mathbb{S}^{n+1}$.
 \item $\overline{\cup_{\epsilon<0} \hat \Gamma_{t_0,\epsilon} } =\hat \Gamma_{t_0}$  and $\cap _{\epsilon>0} \hat \Gamma_{t_0,\epsilon}  =\hat \Gamma_{t_0}$. 
 \end{enumerate} For such approximations, we have $|\Gamma_{t_0,h}| \uparrow |\Gamma_{t_0}|$ as $h\uparrow 0$ and $|\Gamma_{t_0,h}| \downarrow |\Gamma_{t_0}|$ as $h\downarrow 0$.  Such an approximation from inside could be obtained by using the mean curvature flow and an approximation from outside is constructed in Claim 4.1 of \cite{CD}. Let us denote $\Gamma_{t,\epsilon}$ for $t\ge t_0$ by the IMCF running from $\Gamma_{t_0,\epsilon}$. Our goal is to show $\hat \Gamma_t = \overline{\cup_{\epsilon<0} \hat \Gamma_{t,\epsilon}}$. 
 
For $h\in(0,\delta)$, $\mathcal{C} \hat \Gamma_{t_0,-h} \subset \hat\Sigma_{t_0}$ by convexity. We can check that, for fixed $h$, $\mathcal{C}\Gamma_{t,-h}\subset \mathbb{R}^{n+2}$, $t\ge t_0$, is the weak solution running from $\mathcal{C}\Gamma_{t_0,-h}$. The solution from $\mathcal{C}\Gamma_{t_0,-h}$, say $N_t$, has to be a cone as the scaling invariance property should be preserved for the weak solutions and Theorem \ref{thm-euc} implies $N_t$ has to coincide with $\mathcal{C}\Gamma_{t,-h}$. The avoidance principle between weak solutions implies $\mathcal{C}\hat \Gamma_{t,-h} \subset \hat \Sigma_{t}$ for $t\ge t_0$.  By taking a blow-down at time $t$, this proves $\hat \Gamma_{t,-h} \subset \hat\Gamma_t$ for $t\ge t_0$.
On the other hand, for $h\in(0,\delta)$  we may find some $v\in\mathbb{R}^{n+2}$ such that $\hat \Sigma_{t_0} \subset \mathcal{C} \hat \Gamma_{t_0,h} +v$. By the avoidance principle, $\hat \Sigma_{t} \subset \mathcal{C} \hat \Gamma_{t,h} +v$ and this implies $\hat \Gamma_t \subset\hat \Gamma_{t,h}$. In summary, \[\overline{\cup_{\epsilon<0} \hat\Gamma_{t,\epsilon}} \subset\hat \Gamma_{t} \subset \cap_{\epsilon>0} \hat\Gamma_{t,\epsilon}.\]
In addition, \[|\partial(\cup_{\epsilon<0} \hat\Gamma_{t,\epsilon})|=\lim_{\epsilon\to 0-}|\Gamma_{t,\epsilon}|  =e^{t-t_0}\lim_{\epsilon\to 0-}|\Gamma_{t_0,\epsilon}|=|\Gamma_{t_0}|, \]
\[|\partial(\cap_{\epsilon>0} \hat\Gamma_{t,\epsilon})|=\lim_{\epsilon\to 0+}|\Gamma_{t,\epsilon}|  =e^{t-t_0}\lim_{\epsilon\to 0+}|\Gamma_{t_0,\epsilon}|=|\Gamma_{t_0}|, \] and the strict outer area minimizing property of a closed convex set in an open hemisphere imply \[\overline{\cup_{\epsilon<0} \hat\Gamma_{t,\epsilon}}=\hat \Gamma_{t} = \cap_{\epsilon>0} \hat\Gamma_{t,\epsilon}.\]
 
\end{proof} 

Now we are ready to describe the smoothing time of complete convex IMCF in $\mathbb{R}^{n+2}$ and in $\mathbb{S}^{n+1}$.

\begin{corollary}\label{cor-smoothingtime}
Let $\Sigma^{n+1}_0$ be a  complete convex hypersurface in $\mathbb{R}^{n+2}$ and $\Sigma_t$ be the IMCF starting from $\Sigma_0$. Let $T^*$ be the maximal time defined in Remark \ref{non-compact}. Then the solution becomes smooth for $t>T=T(\Sigma_0)$ with \[T:= \sup \{ \ln |\mathbb{S}^{n}| -\ln |\Gamma^n_0(p)|\, :\, \Gamma^n_0(p) = T_p\Sigma_0 \cap \partial B_1(p),\, p\in \Sigma_0\}.\] provided $T<T^*$
Moreover, if $\Sigma_0$ is compact, then there is a bound $T<T_0=(n+1)\left(\ln L_0-\ln r_0\right)$ where $L_0$ is the diameter of $\Sigma_0$ and $r_0$ is the largest radius of a ball contained in $\hat \Sigma_0$. \\

%{\color{red} (Q: What is $R_0$ and $r_0$ here? If they are outer radius (the smallest radius among balls containing $M_0$) and inner radius, then shouldn't this be $2R_0$ instead of $R_0$ as centers could be different. If so, one bad thing is that this bound $T_0\neq 0$ for the sphere. BTW, $R_0/r_0$ can be bounded by a constant which depends on the isoperimetric ratio of initial convex set. (Lemma 4.4 in CONVEX ANCIENT SOLUTIONS OF THE MEAN CURVATURE FLOW by Huisken and Sinestrari) }
%
Similarly, let $\Gamma^n_t\subset \mathbb{S}^{n+1}$ be the solution of IMCF starting from a convex compact hypersurface $\Gamma^n_0\subset\mathbb{S}^{n+1}$. Then the solution becomes smooth for $t>T=T(\Gamma_0)$ with \[T:= \sup \{ \ln |\mathbb{S}^{n-1}| -\ln |\Theta^{n-1}_0(p)|\, :\, \Theta^{n-1}_0(p) = T_p\Gamma_0 \cap \mathbb{S}^n_p,\, p\in \Gamma_0\}\] provided $T<T_{\Gamma_0}$.
\begin{proof} 
First, we consider $\Sigma_t \subset \mathbb{R}^{n+2}$. Without loss of generality we can assume $\Sigma_0$ doesn't split a line and hence the link of the blow-down cone is compactly contained in some open hemisphere. We fix $T<t_0<t_1<T^*$. For $t\ge t_0$, $\hat \Sigma_0\subset \text{int} \hat \Sigma_{t_0}\subset \text{int} \hat \Sigma_{t}$. In view of Proposition 2.11 \cite{DH}, we have a local upper bound of $H$ for $t\ge t_0$. By convexity, this implies the second fundamental form is locally bounded.\ {In addition to this, for $t\leq t_1$, a local lower bound of $H$ is given by Theorem \ref{thm-CD} together with the outer barrier constructed as in the proof of Claim \ref{blow-down}.}\ The solution can be locally written as a graph over a small disk. If $x'$ is a coordinate of the disk and $u(x,'t)$ is the height function of the graph, it solves \[u_t = -(1+|Du|^2)^{1/2} \left[ \text{div} \frac{Du}{(1+|Du|^2)^{1/2}} \right] ^{-1}\] and this is uniformly parabolic if $|Du|$, $H$, and $H^{-1}$ are bounded.  We can always make $|Du|$ bounded by using the bound on the second fundamental form. Then higher regularity theory implies the solution is smooth in $(t_0,t_1]$.  \\

%{\color{red} Next, for $\Gamma_t\subset \mathbb{S}^{n+1}$...} 
%{Next, we consider $\Gamma_t\subset \mathbb{S}^{n+1}$. From the Definition \ref{weak solution} of $\Gamma_t$ and the proof of Lemma \ref{main lemma}, $\mathcal{C}\Gamma_t$ can be approximate by family of smooth compact IMCFs $\Sigma^{n+1}_{\epsilon,t}\subset\mathbb{R}^{n+2}$ from inside and the approximation is locally uniform for fixed $t$. Now we fix $T<t_0<t_1<T_{\Gamma_0}$. For any $t\geq t_0$ and for $\epsilon$ small enough, we have a uniform upper bound of $H$ in $\Sigma_{\epsilon,t}\cap(B_2(0)\setminus B_{1/2}(0))$ by the same argument as above. Note that the assumption $T<T_{\Gamma_0}$ implies $\hat{\Gamma}_0$ contains no antipodal points and Lemma \ref{no_antipodal_pts} applies. Together with Remark \ref{antipodal_pts}, $\hat{\Gamma}_{t_1}$ is compactly contained in some open hemisphere. Hence Theorem \ref{thm-CD} applied to $\Sigma_{\epsilon,t}$ for $t\leq t_1$ implies uniform lower bound of $H$ in $\Sigma_{\epsilon,t}\cap B_2(0)$. Higher order regularity in $\Sigma_{\epsilon,t}\cap(B_2(0)\setminus B_{1/2}(0)),\ t\in [t_0,t_1]$ follows again from a regularity theory. By taking $\epsilon\to 0$, $\Gamma_t$ should be a smooth flow in $t\in(t_0,t_1]$.}

Next, we consider $\Gamma_t\subset \mathbb{S}^{n+1}$. From the assumption $T<T_{\Gamma_0}$ $\hat{\Gamma}_0$ contains no antipodal points and $\Gamma_0$ is compactly contained in some open hemisphere. Denote by $\Sigma_0=\mathcal{C}\Gamma_0\subset \mathbb{R}^{n+2}$ the cone over $\Gamma_0$. Since $\Sigma_t=\mathcal{C}\Gamma_t$, the result follows by applying the same argument as above in the region $B^{n+2}_2(0)\setminus B^{n+2}_1(0)$.
\end{proof}

\end{corollary}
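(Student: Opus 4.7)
The guiding picture is that Theorem \ref{thm-euc} says the singularity at a point $p\in\Sigma_0$ is resolved at precisely the time $T(p)=\ln|\mathbb{S}^n|-\ln|\Gamma_0^n(p)|$, since $p\in\text{int}\,\hat\Sigma_t$ for $t>T(p)$. The quantity $T$ in the statement is the supremum of these times, so for $t>T$ every point of $\Sigma_0$ has moved strictly into the interior of $\hat\Sigma_t$, and since convexity is preserved and no new singularities may form one expects the flow to be classical thereafter. The plan is therefore to fix $T<t_0<t_1<T^*$ and carry out the following three steps: (i) promote the pointwise interior statement of Theorem \ref{thm-euc} to a compact inclusion $\hat\Sigma_0\subset\subset\hat\Sigma_{t_0}$; (ii) derive uniform two-sided bounds on $H$ on $\Sigma_t$ for $t\in[t_0,t_1]$; (iii) pass to a local graph representation and invoke parabolic regularity.

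Step (i) follows by combining the pointwise statement of Theorem \ref{thm-euc} with the monotonicity of $\hat\Sigma_t$ in $t$ and the convexity of $\Sigma_{t_0}$, yielding a uniform positive distance between $\Sigma_0$ and $\Sigma_{t_0}$ over the relevant (compact or truncated) piece of $\Sigma_0$. In step (ii), the upper bound on $H$ on a fixed compact region of $\Sigma_t$ follows from the local curvature estimate in Proposition~2.11 of \cite{DH} together with this positive distance; convexity then upgrades the bound to the full second fundamental form, giving uniform Lipschitz control of the graph below. For the lower bound on $H$, the idea is to apply Theorem \ref{thm-CD} after constructing an outer conical barrier in the spirit of the approximation in Claim \ref{blow-down}, sandwiching $\Sigma_t$ between smooth conical IMCFs of strictly positive angular opening so that the hypothesis of Theorem \ref{thm-CD} holds uniformly along the flow.

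Step (iii) is then routine: writing $\Sigma_t$ locally as a graph $x_{n+2}=u(x',t)$, the height function solves
\[u_t=-(1+|Du|^2)^{1/2}\left[\operatorname{div}\frac{Du}{(1+|Du|^2)^{1/2}}\right]^{-1},\]
which is uniformly parabolic once $|Du|$, $H$ and $H^{-1}$ are bounded. Krylov--Safonov followed by Schauder bootstraps this to smoothness on $(t_0,t_1]$. The spherical statement reduces to the Euclidean one by passing to the cone $\mathcal{C}\Gamma_0\subset\mathbb{R}^{n+2}$ and running the same argument in the annulus $B^{n+2}_2(0)\setminus B^{n+2}_1(0)$, using the identification $\Sigma_t=\mathcal{C}\Gamma_t$ from Theorem \ref{thm-euc}.

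For the quantitative bound $T<(n+1)(\ln L_0-\ln r_0)$ in the compact case, the tangent cone at any $p\in\Sigma_0$ must contain the inscribed ball of radius $r_0$, whose center is within distance $L_0$ of $p$; hence $\hat\Gamma_0^n(p)\subset\mathbb{S}^{n+1}$ contains a geodesic cap of half-angle at least $\arcsin(r_0/L_0)$, and monotonicity of perimeter for convex sets in a hemisphere furnishes a lower bound on $|\Gamma_0^n(p)|_n$ uniform in $p$, from which the stated logarithmic bound follows. I expect the main technical difficulty to be the uniform lower bound on $H$ in step (ii): Theorem \ref{thm-CD} requires a fixed conical opening, and translating the qualitative fact that ``$t_0>T$, so every singularity has resolved'' into the quantitative barrier this hypothesis demands is the step where the content of Theorem \ref{thm-euc} is used most nontrivially.
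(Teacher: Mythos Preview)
Your proposal is correct and follows essentially the same route as the paper: fix $T<t_0<t_1<T^*$, use Theorem \ref{thm-euc} to get $\hat\Sigma_0\subset\text{int}\,\hat\Sigma_{t_0}$, obtain a local upper bound on $H$ from Proposition~2.11 of \cite{DH}, a lower bound on $H$ from Theorem \ref{thm-CD} via the outer conical barrier of Claim \ref{blow-down}, and then bootstrap the graph equation; the spherical case is handled identically by passing to the cone. You also supply the argument for the compact bound $T<(n+1)(\ln L_0-\ln r_0)$, which the paper states but does not prove; your inscribed-ball argument in fact yields the sharper constant $n$ in place of $n+1$.
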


\begin{remark}\label{finremark}Note that
\begin{enumerate}

\item For the smoothing time $T$ and the maximal time of existence $T^*$ of flows in $\mathbb{R}^{n+2}$, it could be easily verified that $T^*\ge T$ and the equality holds if and only if $\Sigma_0$ is a cone.

 \item For the flow in $\mathbb{R}^{n+2}$,  $$\frac{|\Gamma^n_0(p)|}{|\mathbb{S}^n|}= \lim_{r\to0} \frac{|B_r(p) \cap \Sigma_0|}{\omega_{n+1} r^{n+1}}=:\rho(p)$$ which is the density of the hypersurface $\Sigma_0^{n+1}\subset \mathbb{R}^{n+2}$ at $p$. The same thing can be written for $\Gamma_t^n \subset \mathbb{S}^{n+1}$. Hence, we have an exact formula of time $T$: 
 \[T(\Sigma_0)=\sup\,\{-\ln\rho(p)\,:\, p \in \Sigma_0\}\quad\text{and}\quad T(\Gamma_0) =\sup\,\{-\ln\rho(p)\,:\, p \in \Gamma_0\}.\]

\item When initial hypersurface is compact, the supremum in the definition of $T$ is actually achieved by a point in $\Sigma_0$. This is because the density $|\Gamma_0(p)|$ is a lower semi-continuous function of $p$ {due to Bishop-Gromov volume comparison \cite{BBI}.} The same happens for $\Gamma_0$ and $|\Theta_0(p)|$.

\item Corolloary \ref{cor-smoothingtime} implies our weak solution becomes smooth for $t>0$ if $\rho\equiv 1$. On the other hand, if there is a point $q$ with $\rho(q)<1$, by locating conical barriers on outside (like one we did in the proof of Lemma \ref{main lemma}), we can check there is no smooth solution upto $t<-\ln\rho(q)$. Therefore $\rho\equiv 1$ is a {\em sufficient and necessary condition} for an existence of smooth classical solution.  Note if $\rho(q)=1$, the tangent cone at $q$ has to be either a plane or an wedge by Lemma \ref{wedge}. It is interesting to note that for hypersurfaces, $$\{\text{convex and }C^1\}\subsetneq\{\text{convex and } \rho\equiv1 \} \subsetneq \{\text{convex}\}.$$

 \end{enumerate}\end{remark}

 This result gives a concrete picture of singular IMCF for convex hypersurface, and in particular, this answers our original question on a cube.
\begin{example}[Cube in $\mathbb{R}^3$]\label{example-cube} \begin{figure}
  \centering{\tiny
  \def\svgwidth{\columnwidth}
    \resizebox{0.7\textwidth}{!}{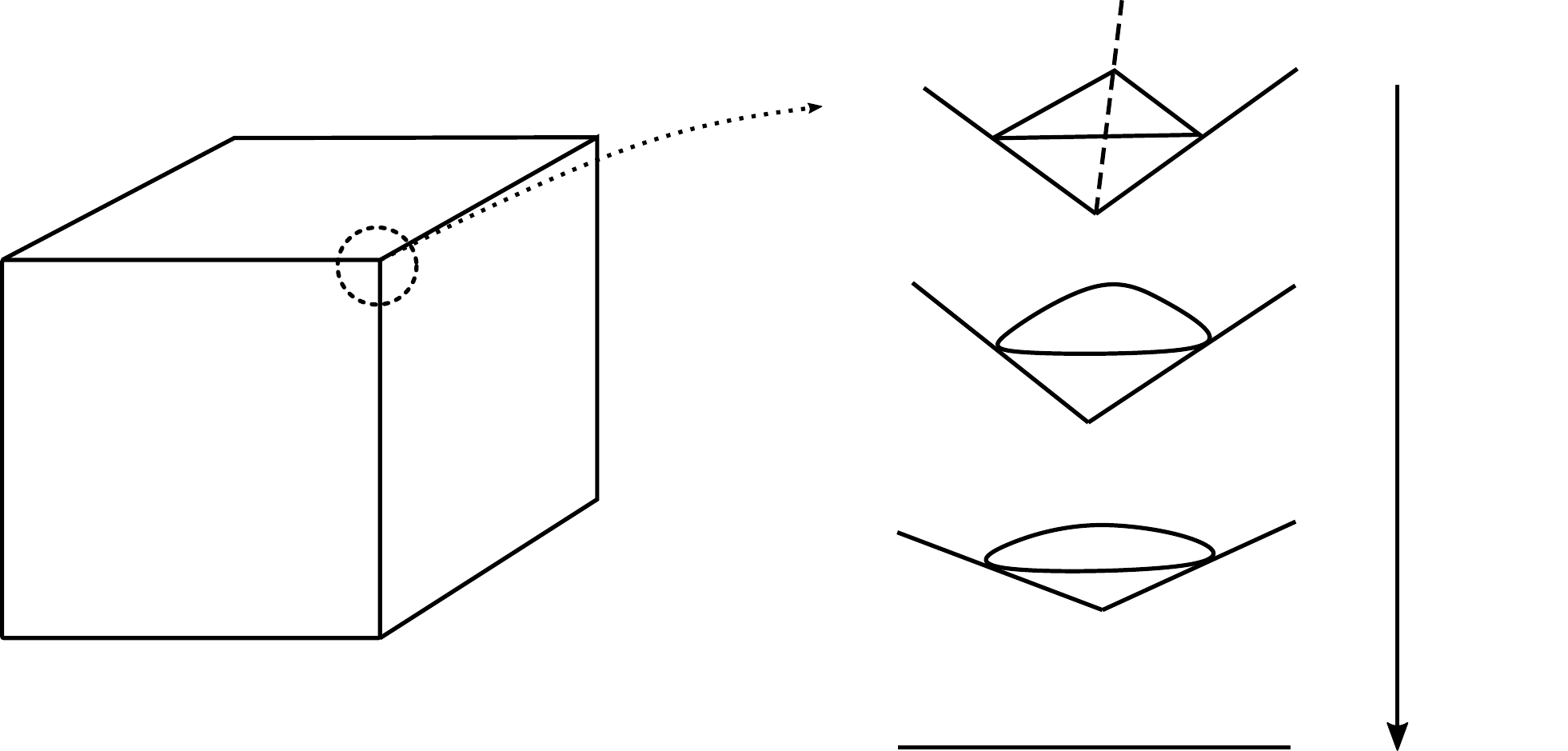}
 % \label{ }
  \caption{Example \ref{example-cube}}}
\end{figure}Let $\Sigma_0^2$ be a cube in $\mathbb{R}^3$ with the equal length of edges. 
 The IMCF $\Sigma_t$ has the following properties: 

\begin{enumerate} 
\item Edges are smoothened for $t>0$ by Theorem \ref{thm-euc} and the solution is smooth away from the corners. 
\item Each corner stays their original position until $t=\ln 2\pi - \ln \frac{3}{2}\pi = \ln 4/3$ and it evolves by IMCF by Theorem \ref{thm-euc}.
\item The solution becomes strictly convex for $t>0$ by strict convexity theorem in Appendix of \cite{CD}
\item The solution becomes smooth for $t> \ln 4/3$. Also, it expands and converges to an expanding sphere as $t\to\infty$ by \cite{Ur} and \cite{Ge2}. 
\end{enumerate}

\end{example}

\end{document}